\newtheorem{theorem}{Theorem}[section]
\newtheorem{remark}{Remark}[section]
\newtheorem{lemma}{Lemma}[section]
\newtheorem{proposition}[theorem]{Proposition}
\theoremstyle{definition}
\numberwithin{equation}{section}
\newcommand{\uu}{{\sf u}}
\newcommand{\UU}{{\sf U}}
\title[An IMEX  FE method for a linearized CHC equation]
{An IMEX Finite Element Method\\
for a linearized Cahn-Hilliard-Cook equation\\
driven by the space derivative\\
 of a space-time white noise}
\author[]{Georgios E. Zouraris$^{\dag}$}
\thanks{Work partially supported by The Research Committee of The University
of Crete under Research Grant \#4339:
`{\sl Numerical solution of stochastic partial differential equations}' funded by
The Research Account of the University of Crete (2015-2016).}
\thanks{%
$^{\ddag}$Department of Mathematics and Applied Mathematics,
University of Crete,  P.O. Box 2208, GR--710 03 Heraklion, Crete, Greece}
\subjclass{Primary: 65M60, 65M15, 65C20}
\keywords{finite element method, space derivative of a space-time
white noise, spectral representation of the noise,
implicit/explicit time-stepping, fully-discrete
approximations, a priori error estimates}
 \email{georgios.zouraris@uoc.gr}
\newcommand{\rset}{{\mathbb R}}
\newcommand{\nset}{{\mathbb N}}
\newcommand{\bfdot}{\bf\dot}
\newcommand{\dtau}{\Delta\tau}
\newcommand{\ken}{\ \ }
\newcommand{\ssy}{\scriptscriptstyle}
\newcommand{\half}{\frac{1}{2}}
\begin{document}
\maketitle
\centerline{\today}
%
%
%
%
%
%
\begin{abstract}
We consider a model initial- and Dirichlet boundary- value problem for
a linearized Cahn-Hilliard-Cook equation, in one space dimension,
forced by the space derivative of a space-time white noise.
First, we introduce a canvas problem the solution to which is a regular
appro\-ximation of the mild solution to the problem and
depends on a finite number of random variables.
Then, fully-discrete approximations of the solution to the
canvas problem are constructed using, for discretization in
space, a Galerkin finite element method based on $H^2$ piecewise
polynomials, and, for time-stepping, an implicit/explicit method.
Finally, we derive a strong a priori estimate of the error approximating the
mild solution to the problem by the canvas problem solution, and of the
numerical approximation error of the solution to the canvas problem. 
\end{abstract}
%
%
%
\section{Introduction}\label{SECT1}
%
%
%
%
%
%
Let $T>0$, $D:=(0,1)$ and $(\Omega,{\mathcal F},P)$ be a complete
probability space. Then, we consider the model initial-
and Dirichlet boundary- value problem for a linearized
Cahn-Hilliard-Cook equation formulated in \cite{KZ2013b},
which is as follows: find a stochastic function
$u:[0,T]\times{\overline D}\to\rset$ such that
\begin{equation}\label{PARAB}
\begin{gathered}
u_t+u_{xxxx}+\mu\,u_{xx}
=\partial_x{\dot W}(t,x)
\quad\forall\,(t,x)\in (0,T]\times D,\\
u(t,\cdot)\big|_{\ssy\partial D}=u_{xx}(t,\cdot)\big|_{\ssy\partial D}=0
\quad\forall\,t\in(0,T],\\
u(0,x)=0\quad\forall\,x\in D,\\
\end{gathered}
\end{equation}
a.s. in $\Omega$, where ${\dot W}$ denotes a space-time white
noise on $[0,T]\times D$ (see, e.g., \cite{Walsh86},
\cite{KXiong}) and $\mu$ is a real constant.
We recall that the mild solution of the problem above
(cf. \cite{DZ2007}) is given by
\begin{equation}\label{MildSol}
u(t,x)=\int_0^t\!\int_{\ssy D}{\sf\Psi}_{t-s}(x,y)\,dW(s,y),
\end{equation}
where
\begin{equation}\label{GreenKernel}
\begin{split}
{\sf\Psi}_t(x,y):=&\,-\sum_{k=1}^{\infty}\lambda_k\,e^{-\lambda_k^2\,(\lambda_k^2-\mu)
t} \,\varepsilon_k(x)\,\varphi_k(y)\\
=&\,-\partial_y{\sf G}_t(x,y) \quad
\forall\,(t,x,y)\in(0,T]\times{\overline D}\times{\overline D},
\end{split}
\end{equation}
$\lambda_k:=k\,\pi$ for $k\in\nset$,
$\varepsilon_k(z):=\sqrt{2}\,\sin(\lambda_k\,z)$
and $\varphi_k(z):=\sqrt{2}\,\cos(\lambda_k\,z)$
for $z\in{\overline D}$ and $k\in\nset$,
and  ${\sf G}_t(x,y)$ is the space-time Green kernel
of the solution to the deterministic parabolic problem:
find  $w:[0,T]\times{\overline D}\to\rset$ such that
\begin{equation}\label{Det_Parab}
\begin{gathered}
w_t + w_{xxxx}+\mu\,w_{xx}= 0
\quad\forall\,(t,x)\in (0,T]\times D,\\
w(t,\cdot)\big|_{\ssy\partial D}=
w_{xx}(t,\cdot)\big|_{\ssy\partial D}=0
\quad\forall\,t\in(0,T],\\
w(0,x)=w_0(x)\quad\forall\,x\in D.\\
\end{gathered}
\end{equation}
%
%
\par
In the paper at hand, our goal is to propose and analyze a numerical method for the approximation
of $u$ that has less stability requirements and lower complexity than the method proposed in
\cite{KZ2013b}.
\subsection{A canvas problem}\label{The_Reg_Problem}
\par
A {\it canvas} problem is an initial- and boundary- value problem
the solution to which: i) depends on a finite number of random variables and
ii) is a regular approximation of the mild solution $u$ to \eqref{PARAB}. Then,
we can derive computable approximations of $u$ by constructing numerical
approximations of the canvas problem solution via the application of a
discretization technique for stochastic partial differential equations with random
coefficients. The formulation of the canvas problem depends on the way we
replace the infinite stochastic dimensionality of the problem \eqref{PARAB} by a finite one.
\par
In our case the canvas problem is formulated as follows (cf. \cite{ANZ}, \cite{KZ2010}, \cite{KZ2013b}):
Let ${\sf M},{\sf N}\in{\mathbb N}$, $\Delta{t}:=\frac{T}{{\sf N}}$, $t_n:=n\,\Delta{t}$ 
for $n=0,\dots,{\sf N}$, ${\sf T}_n:=(t_{n-1},t_n)$ for $n=1,\dots,{\sf N}$,
and $\uu:[0,T]\times{\overline{D}}\rightarrow{\mathbb R}$ such that
\begin{equation}\label{AC2}
\begin{gathered}
\uu_t +\uu_{xxxx} +\mu\,\uu_{xx}=\partial_x{\mathcal W}
\quad\text{\rm in}\ken (0,T]\times D,\\
\uu(t,\cdot)\big|_{\ssy\partial D}=
\uu_{xx}(t,\cdot)\big|_{\ssy\partial D}=0
\quad\forall\,t\in(0,T],\\
\uu(0,x)=0\quad\forall\,x\in D,\\
\end{gathered}
\end{equation}
where
\begin{equation}\label{SLoad}
{\mathcal W}(\cdot,x)|_{\ssy{\sf T}_n} :=
\tfrac{1}{\Delta{t}}\,
\sum_{i=1}^{\ssy{\sf M}}
R^n_i\,\varphi_i(x)
\quad\forall \,x\in D,\quad n=1,\dots,{\sf N},
\end{equation}
\begin{equation}\label{RDef}
R^n_i:=\int_{\ssy{\sf T}_n}\!\int_{\ssy D}\varphi_i(x)\;dW(t,x)
=B^i(t_{n+1})-B^i(t_n),
\quad i=1,\dots,{\sf M},\quad n=1,\dots,{\sf N},
\end{equation}
and $B^i(t):=\int_0^t\int_{\ssy D}\varphi_i(x)\;dW(s,x)$
for $t\ge0$ and $i\in{\mathbb N}$. According to \cite{Walsh86},
$(B^i)_{i=1}^{\infty}$ is a family of independent Brownian motions,
and thus, the random variables
$\left(\left(R^n_i\right)_{n=1}^{\ssy {\sf N}}\right)_{i=1}^{\ssy\sf M}$
are independent and satisfy
\begin{equation}\label{WRemark}
R^n_i\sim{\mathcal N}(0,\Delta{t}),\quad
i=1,\dots,{\sf M},\quad n=1,\dots,{\sf N}.
\end{equation}
%
%
Thus, the solution $\uu$ to \eqref{AC2} depends on ${\sf N}{\sf M}$ random variables
and the well-known theory for parabolic problems (see, e.g, \cite{LMag}) yields its regularity
along with the following representation formula:
\begin{equation}\label{HatUform}
\begin{split}
\uu(t,x)=&\,\int_0^t\!\!\int_{\ssy D}{\sf G}_{t-s}(x,y)
\,\partial_y{\mathcal W}(s,y)\,dsdy\\
=&\,\int_0^t\!\!\int_{\ssy D}{\sf\Psi}_{t-s}(x,y) \,{\mathcal W}(s,y)
\,dsdy \quad\forall\,(t,x)\in[0,T]\times{\overline D}.\\
\end{split}
\end{equation}
%
%
%
%
%
%
%
\begin{remark}\label{complex_rem}
In \cite{KZ2013b} the definition of ${\mathcal W}$ is based on a
uniform partition of $[0,T]$ in $N$ subintervals 
and on a uniform partition of $D$ in $J$ subintervals.
At every time slab, ${\mathcal W}$ has a constant value with respect to the
time variable, but, with respect to the space variable, is defined as the $L^2(D)-$projection of a
random, piecewise constant function onto the space of linear splines, the computation of which 
leads to the numerical solution of a $(J+1)\times(J+1)$ tridiagonal linear system of
algebraic equations. Finally, ${\mathcal W}$ depends on $N (J+1)$ random variables
and its construction has $O(N\,(J+1))$ complexity, that must to be added to the complexity
of the numerical method used for the approximation of $\uu$. On the contrary, 
the stochastic load ${\mathcal W}$ of the canvas problem \eqref{AC2} we propose here, 
is given explicitly by the formula \eqref{SLoad}, and thus no extra computational cost is required
for its formation. 
\end{remark}
%
%
\subsection{An IMEX finite element method}\label{The_Numerical_Method}
\par
Let $M\in{\mathbb N}$, $\dtau:=\frac{T}{M}$,
$\tau_m:=m\,\dtau$ for $m=0,\dots,M$, and
$\Delta_m:=(\tau_{m-1},\tau_m)$ for $m=1,\dots,M$.
Also, for $r=2$ or $3$,  let ${\sf M}_h^r\subset H^2(D)\cap H_0^1(D)$
be a finite element space consisting
of functions which are piecewise polynomials of degree
at most $r$ over a partition of $D$ in intervals with
maximum mesh-length $h$.
\par
The fully-discrete method we propose for the numerical approximation of $\uu$
uses an implicit/expli\-cit (IMEX) time-discretization treatment of the space
differential operator along with a finite element variational formulation for
space discretization.  Its algorithm is as follows: first sets  
\begin{equation}\label{FullDE1}
\UU_h^0:=0
\end{equation}
and then, for $m=1,\dots,M$, finds $\UU_h^m\in{\sf M}_h^r$
such that
\begin{equation}\label{FullDE2}
\left(\,\UU_h^m-\UU_h^{m-1},\chi\,\right)_{\ssy 0,D}
+\dtau\,\left[\,(\,\partial_x^2\UU_h^{m},\partial_x^2\chi\,)_{\ssy 0,D}
+\mu\,(\,\partial_x^2\UU_h^{m-1},\chi\,)_{\ssy 0,D}\,\right]
=\int_{\ssy\Delta_m}\left(\partial_x{\mathcal
W},\chi\right)_{\ssy 0,D}\,d\tau
\end{equation}
for all $\chi\in{\sf M}_h^r$, where $(\cdot,\cdot)_{\ssy 0,D}$ is the
usual $L^2(D)-$inner product.
\begin{remark}\label{rokoko}
It is easily seen that the numerical method above is  unconditionally stable,
while the Backward Euler finite element method  is stable
under the time-step restriction $\dtau\,\mu^2\leq4$ (see \cite{KZ2013b}).
\end{remark}
%
%
%
%
\subsection{An overview of the paper}
%
%
In Section~\ref{SECTIONCHILD}, we introduce notation and we recall several results that are
often used in the rest of the paper. In Section~\ref{SECTION2}, we focus on the estimation of the error
we made by approximating the solution $u$ to \eqref{PARAB} by the solution $\uu$ to \eqref{AC2},
arriving at the bound
\begin{equation*}
\max_{\ssy[0,T]}\left(\,{\mathbb E}\left[\,
\|u-\uu\|^2_{\ssy L^2(D)}\,\right]\,\right)^{\half}
\leq\,C\,\left(\,{\sf M}^{-\half}+{\Delta t}^{\frac{1}{8}}\,\right)
\end{equation*}
(see Theorem~\ref{BIG_Qewrhma1}).
Section~\ref{Det_Section} is dedicated to the definition and the convergence
analysis of modi\-fied IMEX time-discrete and fully-discrete approximations
of the solution $w$ to the deterministic problem \eqref{Det_Parab}.
The results obtained are used  later in Section~\ref{Stoch_Section}, where we analyze
the numerical method for the approximation of $\uu$, given in Section~\ref{The_Numerical_Method}.
Its convergence is established by proving the following strong error estimate
\begin{equation*}\label{FDEstim4}
\max_{0\leq{m}\leq{\ssy M}}\left(\,
{\mathbb E}\left[\,\|\UU_h^m-\uu(\tau_m,\cdot)\|^2_{\ssy 0,D}\,\right]\right)^{\frac{1}{2}}
\leq\,C\,
\,\left(\,\epsilon^{-\frac{1}{2}}_1\,\dtau^{\frac{1}{8}-\epsilon_1}
+\epsilon_2^{-\frac{1}{2}}\,\,\,h^{\frac{r}{6}-\epsilon_2}\,\right)
\end{equation*}
for all $\epsilon_1\in\left(0,\frac{1}{8}\right]$ and
$\epsilon_2\in\left(0,\frac{r}{6}\right]$ (see Theorem~\ref{FFQEWR}).
We obtain the latter error bound,
by applying a discrete Duhamel principle technique to estimate separately the
{\it time discretization error} and the {\it space discretization error},
which are defined using as an intermediate the corresponding IMEX time-discrete
approximations of $\uu$, specified by \eqref{BackE1} and \eqref{BackE2}
(cf., e.g., \cite{KZ2010}, \cite{KZ2013b}, \cite{YubinY05}).
\par
Since we have no assumptions on the sign, or, the size of $\mu$,
the elliptic operator in \eqref{AC2}
is, in general, not invertible. This is the reason that the
Backward Euler/finite element method is stable and convergent
after adopting a restriction on the time-step size (see \cite{KZ2013b},
Remark~\ref{rokoko}).
On the contrary, the IMEX/finite element method we propose here
is unconditionally stable and convergent, because
the principal part of the elliptic operator is treated implicitly and
its lower order part explicitly.
Another characteristic in our method is the choice to build up the canvas
problem using spectral functions, which allow us to avoid the numerical
solution of an extra linear system of algebraic equation at every time step
that is required in the approach of \cite{KZ2013b}
(see Remark~\ref{complex_rem}).
\par
The error analysis of the IMEX finite element method is more technical than that in \cite{KZ2013b}
for the Backward Euler finite element method. The main difference is due to the fact that
the representation of the time-discrete and fully discrete approximations of $\uu$ is related
to a modified version of the IMEX time-stepping method for the approximation of
the solution to the deterministic problem \eqref{Det_Parab}, the error analysis of which is
necessary in obtaining the desired error estimate and is of independent interest
(see Section~\ref{Det_Section}).
%
%
%
%
%
%
%
%
%
%
%
%
%
%
%
%
%
\section{Preliminaries}\label{SECTIONCHILD}
We denote by $L^2(D)$ the space of the Lebesgue measurable
functions which are square integrable on $D$ with respect to
the Lebesgue measure $dx$. The space $L^2(D)$ is provided
with the standard norm $\|g\|_{\ssy 0,D}:= \left(\int_{\ssy D}|g(x)|^2\,dx\right)^{\frac{1}{2}}$
for $g\in L^2(D)$, which is derived by the usual inner product
 $(g_1,g_2)_{\ssy 0,D}:=\int_{\ssy D}g_1(x)\,g_2(x)\,dx$
for $g_1$, $g_2\in L^2(D)$.
Also, we employ the symbol ${\mathbb N}_0$ for the set of all nonnegative integers.
\par
For $s\in\nset_0$, we denote by $H^s(D)$ the Sobolev space of functions
having generalized derivatives up to order $s$ in
$L^2(D)$, and by $\|\cdot\|_{\ssy s,D}$ its usual norm, i.e.
$\|g\|_{\ssy s,D}:=\left(\sum_{\ell=0}^s
\|\partial_x^{\ell}g\|_{\ssy 0,D}^2\right)^{\ssy 1/2}$
for $g\in H^s(D)$. Also, by $H_0^1(I)$ we denote the subspace of $H^1(D)$
consisting of functions which vanish at the endpoints of $D$ in
the sense of trace.
%
%
%
\par
The sequence of pairs
$\left\{\left(\lambda_i^2,\varepsilon_i\right)\right\}_{i=1}^{\infty}$ is a
solution to the eigenvalue/eigenfunction problem: find nonzero
$\varphi\in H^2(D)\cap H_0^1(D)$ and $\lambda\in\rset$ such that
$-\varphi''=\lambda\,\varphi$ in $D$. Since
$(\varepsilon_i)_{i=1}^{\infty}$ is a complete
$(\cdot,\cdot)_{\ssy 0,D}-$orthonormal system in $L^2(D)$, for
$s\in\rset$, we define by
\begin{equation*}
{\mathcal V}^s(D):=\Big\{v\in L^2(D):\quad\sum_{i=1}^{\infty}
\lambda_{i}^{2s} \,(v,\varepsilon_i)^2_{\ssy 0,D}<\infty\,\Big\}
\end{equation*}
a subspace of $L^2(D)$ provided with the natural norm
$\|v\|_{\ssy{\mathcal V}^s}:=\big(\,\sum_{i=1}^{\infty}
\lambda_{i}^{2s}\,(v,\varepsilon_i)^2_{\ssy
0,D}\,\big)^{\ssy 1/2}$ for $v\in{\mathcal V}^s(D)$.
For $s\ge 0$, the space $({\mathcal V}^s(D),\|\cdot\|_{\ssy{\mathcal V}^s})$ is a complete
subspace of $L^2(D)$ and we define
$({\bfdot H}^s(D),\|\cdot\|_{\ssy{\bfdot H}^s})
:=({\mathcal V}^s(D),\|\cdot\|_{\ssy{\mathcal V}^s})$.
For $s<0$, the space $({\bfdot H}^s(D),\|\cdot\|_{\ssy{\bfdot H}^s})$  is defined
as the completion of $({\mathcal V}^s(D),\|\cdot\|_{\ssy{\mathcal V}^s})$, or, equivalently, as the dual of
 $({\bfdot H}^{-s}(D),\|\cdot\|_{\ssy{\bfdot H}^{-s}})$.
\par
Let $m\in\nset_0$. It is well-known (see \cite{Thomee}) that
\begin{equation*}\label{dot_charact}
{\bfdot H}^m(D)=\left\{\,v\in H^m(D):
\quad\partial^{2\ell}v\left|_{\ssy\partial D}\right.=0
\quad\text{\rm if}\ken 0\leq{2\ell}<m\,\right\}
\end{equation*}
and that there exist constants $C_{m,{\ssy A}}$ and $C_{m,{\ssy B}}$
such that
\begin{equation}\label{H_equiv}
C_{m,{\ssy A}}\,\|v\|_{\ssy m,D} \leq\|v\|_{\ssy{\bfdot H}^m}
\leq\,C_{m,{\ssy B}}\,\|v\|_{\ssy m,D}\quad \forall\,v\in{\bfdot
H}^m(D).
\end{equation}
Also, we define on $L^2(D)$ the negative norm $\|\cdot\|_{\ssy -m,
D}$ by
\begin{equation*}
\|v\|_{\ssy -m, D}:=\sup\Big\{ \tfrac{(v,\varphi)_{\ssy 0,D}}
{\|\varphi\|_{\ssy m,D}}:\quad \varphi\in{\bfdot H}^m(D)
\ken\text{\rm and}\ken\varphi\not=0\Big\} \quad\forall\,v\in
L^2(D),
\end{equation*}
for which, using \eqref{H_equiv}, follows that
there exists a constant $C_{-m}>0$ such that
\begin{equation}\label{minus_equiv}
\|v\|_{\ssy -m,D}\leq\,C_{-m}\,\|v\|_{{\bfdot H}^{-m}}
\quad\forall\,v\in L^2(D).
\end{equation}
\par
Let ${\mathbb L}_2=(L^2(D),(\cdot,\cdot)_{\ssy 0,D})$ and
${\mathcal L}({\mathbb L}_2)$ be the space of linear, bounded
operators from ${\mathbb L}_2$ to ${\mathbb L}_2$. An
operator $\Gamma\in {\mathcal L}({\mathbb L}_2)$ is 
Hilbert-Schmidt, when $\|\Gamma\|_{\ssy\rm
HS}:=\left(\sum_{i=1}^{\infty} \|\Gamma\varepsilon_i\|^2_{\ssy
0,D}\right)^{\half}<+\infty$, where $\|\Gamma\|_{\ssy\rm HS}$ is
the so called Hilbert-Schmidt norm of $\Gamma$.
%
%
We note that the quantity $\|\Gamma\|_{\ssy\rm HS}$ does not
change when we replace $(\varepsilon_i)_{i=1}^{\infty}$ by
another complete orthonormal system of ${\mathbb L}_2$.
It is well known (see, e.g., \cite{DunSch}, \cite{LPS}) that an operator
$\Gamma\in{\mathcal L}({\mathbb L}_2)$ is Hilbert-Schmidt iff
there exists a measurable function $\gamma:D\times D\rightarrow{\mathbb
R}$ such that $\Gamma[v](\cdot)=\int_{\ssy D}\gamma(\cdot,y)\,v(y)\,dy$
for $v\in L^2(D)$, and then, it holds that
%
%
\begin{equation}\label{HSxar}
\|\Gamma\|_{\ssy\rm HS} =\left(\,
\iint_{\ssy D\times D}\gamma^2(x,y)\,dxdy\,\right)^{\half}.
\end{equation}
Let ${\mathcal L}_{\ssy\rm HS}({\mathbb L}_2)$ be the set of
Hilbert Schmidt operators of ${\mathcal L}({\mathbb L}^2)$ and
$\Phi:[0,T]\rightarrow {\mathcal L}_{\ssy\rm HS}({\mathbb L}_2)$.
Also, for a random variable $X$, let ${\mathbb E}[X]$ be its
expected value, i.e., ${\mathbb E}[X]:=\int_{\ssy\Omega}X\,dP$.
Then, the It{\^o} isometry property for stochastic integrals  reads
\begin{equation}\label{Ito_Isom}
{\mathbb E}\left[\Big\|\int_0^{\ssy T}\Phi\,dW\Big\|_{\ssy 0,D}^2\right]
=\int_0^{\ssy T}\|\Phi(t)\|_{\ssy\rm HS}^2\,dt.
\end{equation}
\par
%
%
For later use, we recall that if $({\mathcal H},(\cdot,\cdot)_{\ssy{\mathcal H}})$ is a
real inner product space with induced norm $|\cdot|_{\ssy{\mathcal H}}$,
then
\begin{equation}\label{innerproduct}
2\,(g-v,g)_{\ssy{\mathcal H}}
=|g|^2_{\ssy{\mathcal H}}-|v|^2_{\ssy{\mathcal H}}
+|g-v|^2_{\ssy{\mathcal H}}\quad\forall\,g,v\in{\mathcal H}.
\end{equation}
\par
Finally, for any nonempty set $A$, we denote by ${\mathcal X}_{\ssy A}$
the indicator function of $A$.
%
%
%
%
%

%
%
%
%
%
%
\subsection{A projection operator}
Let ${\mathcal O}:=(0,T)\times D$,
${\mathfrak S}_{\ssy{\sf M}}:=\mathop{\rm span}(\varphi_i)_{i=1}^{\ssy{\sf M}}$,
${\mathfrak S}_{\ssy{\sf N}}:=\mathop{\rm span}({\mathcal X}_{\ssy T_n})_{n=1}^{\ssy{\sf N}}$
%
%
and ${\sf\Pi}:L^2({\mathcal O})\rightarrow{\mathfrak S}_{\ssy{\sf N}}\otimes{\mathfrak S}_{\ssy{\sf M}}$
the usual $L^2({\mathcal O})-$projection operator which is given by the formula
\begin{equation}\label{Defin_L2}
{\sf\Pi}g:=\tfrac{1}{\Delta{t}}\,\sum_{i=1}^{\ssy{\sf M}}\,\left(\,\sum_{n=1}^{\ssy{\sf N}}
{\mathcal X}_{\ssy{T_n}}\,\int_{\ssy T_n}
(g,\varphi_i)_{\ssy 0,D}\,dt\right)\,\varphi_i
\quad\forall\,g\in L^2({\mathcal O}).
\end{equation}
%
%
Then, the following representation of the stochastic integral of ${\sf\Pi}$ holds (cf. Lemma~2.1 in \cite{KZ2010}).
%
%
%
\begin{lemma}\label{Lhmma1}
For $g\in L^2({\mathcal O})$, it holds that
\begin{equation}\label{WNEQ2}
\int_0^{\ssy T}\!\!\int_{\ssy D}{\sf\Pi}g(t,x)\,dW(t,x)
=\iint_{\ssy{\mathcal O}}{\mathcal W}(s,y)\,g(s,y)\,dsdy.
\end{equation}
\end{lemma}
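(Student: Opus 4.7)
The plan is to reduce both sides of \eqref{WNEQ2} to the same finite double sum indexed by $(i,n) \in \{1,\dots,{\sf M}\} \times \{1,\dots,{\sf N}\}$, using only the explicit definitions \eqref{SLoad}, \eqref{RDef}, and \eqref{Defin_L2} together with linearity of the Itô integral and Fubini's theorem. No estimation is required; the identity is purely algebraic once one unwinds the definitions.

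For the left-hand side, I would substitute the formula \eqref{Defin_L2} for ${\sf\Pi}g$ and pull the finite sums over $i$ and $n$, along with the deterministic coefficients $\frac{1}{\Delta t}\int_{{\sf T}_n}(g,\varphi_i)_{\ssy 0,D}\,dt$, outside the stochastic integral. This is legitimate by linearity of the Walsh stochastic integral, since the integrand is a finite linear combination of the deterministic functions $(t,x)\mapsto {\mathcal X}_{{\sf T}_n}(t)\,\varphi_i(x)$, each of which is trivially in $L^2({\mathcal O})$. The resulting stochastic integrals collapse, via the very definition \eqref{RDef}, to the Gaussian variables $R_i^n$, leaving
\begin{equation*}
\int_0^{\ssy T}\!\!\int_{\ssy D}{\sf\Pi}g(t,x)\,dW(t,x)
= \tfrac{1}{\Delta t}\,\sum_{i=1}^{\ssy{\sf M}}\sum_{n=1}^{\ssy{\sf N}}
R_i^n\,\int_{\ssy{\sf T}_n}(g(t,\cdot),\varphi_i)_{\ssy 0,D}\,dt.
\end{equation*}

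For the right-hand side, I would substitute the formula \eqref{SLoad} for ${\mathcal W}$, split the time integral as $\int_0^T = \sum_{n=1}^{\ssy{\sf N}}\int_{{\sf T}_n}$, and use Fubini on each slab (which applies since $g\in L^2({\mathcal O})$ and ${\mathcal W}$ is a finite sum on each ${\sf T}_n$) to get
\begin{equation*}
\iint_{\ssy{\mathcal O}}{\mathcal W}(s,y)\,g(s,y)\,dsdy
= \tfrac{1}{\Delta t}\,\sum_{i=1}^{\ssy{\sf M}}\sum_{n=1}^{\ssy{\sf N}}
R_i^n\,\int_{\ssy{\sf T}_n}(g(s,\cdot),\varphi_i)_{\ssy 0,D}\,ds,
\end{equation*}
which matches the expression above and finishes the proof.

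There is no real obstacle here; the only subtlety worth a sentence is the justification for moving the deterministic sums outside the stochastic integral, but because the sums are finite and the coefficients are deterministic, this follows immediately from the bilinearity of the Walsh integral with respect to elementary integrands. The lemma therefore functions as a compatibility identity between the $L^2({\mathcal O})$-projection ${\sf\Pi}$ and the piecewise-constant random load ${\mathcal W}$, and its proof is essentially bookkeeping.
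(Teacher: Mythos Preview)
Your proposal is correct and follows essentially the same approach as the paper: both arguments substitute the explicit formulas \eqref{Defin_L2}, \eqref{RDef}, and \eqref{SLoad} and reduce each side of \eqref{WNEQ2} to the common finite sum $\tfrac{1}{\Delta t}\sum_{i=1}^{\sf M}\sum_{n=1}^{\sf N} R_i^n\int_{{\sf T}_n}(g,\varphi_i)_{\ssy 0,D}\,dt$. The only cosmetic difference is that the paper writes the computation as a single chain from left to right, whereas you compute each side separately and match them.
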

%
%
%
%
%
%
%
%
%
%
%
%
%
%
%
%
%
%
\begin{proof}
Using \eqref{Defin_L2} and \eqref{RDef}, we have
\begin{equation*}
\begin{split}
\int_0^{\ssy T}\!\!\int_{\ssy D}{\sf\Pi}g(t,x)\,dW(t,x)
=&\tfrac{1}{\Delta{t}}\, \sum_{n=1}^{\ssy{\sf N}}
\,\sum_{i=1}^{\ssy{\sf M}}\left(\,
\int_{\ssy T_n}\!\int_{\ssy D}g(s,y)\,\varphi_i(y)\;dsdy\,\right)\, R_i^{n}\\
=&\tfrac{1}{\Delta{t}}\, \sum_{n=1}^{\ssy{\sf N}}
\,\sum_{i=1}^{\ssy{\sf M}}\left(\,
\iint_{\ssy{\mathcal O}}{\mathcal X}_{\ssy T_n}(s)\,R_i^{n}\,g(s,y)\,\varphi_i(y)\;dsdy
\,\right)\\
=&\,\iint_{\ssy{\mathcal O}}g(s,y)\,\left (\,\tfrac{1}{\Delta{t}}\,\sum_{n=1}^{\ssy{\sf N}}
\,\sum_{i=1}^{\ssy{\sf M}}{\mathcal X}_{\ssy T_n}(s)\,R_i^{n}\,\varphi_i(y)\,\right)\;dsdy\\
\end{split}
\end{equation*}
which along \eqref{SLoad} yields \eqref{WNEQ2}.
\end{proof}
%
%
%
%
\subsection{Linear elliptic and parabolic operators}\label{SECTION31}
%
Let $T_{\ssy E}:L^2(D)\rightarrow{\bfdot H}^2(D)$ be the solution
operator of the Dirichlet two-point boundary
value problem: for given $f\in L^2(D)$ find $v_{\ssy E}\in {\bfdot
H}^2(D)$ such that $v_{\ssy E}''=f$ in $D$,
i.e. $T_{\ssy E}f:=v_{\ssy E}$. It is well-known that 
\begin{equation}\label{TE_Prop}
(T_{\ssy E}f,g)_{\ssy 0,D}=(f,T_{\ssy E}g)_{\ssy 0,D}
\quad\forall\,f,g\in L^2(D)
\end{equation}
and, for $m\in{\mathbb N}_0$, there exists a constant $C_{\ssy E}^m>0$ such that
\begin{equation}\label{ElReg1}
\|T_{\ssy E}f\|_{\ssy m,D}\leq \,C_{\ssy E}^m \,\|f\|_{\ssy m-2, D}
\quad\forall\,f\in H^{\max\{0,m-2\}}(D).
\end{equation}
\par
Let, also, $T_{\ssy B}:L^2(D)\rightarrow{\bfdot H}^4(D)$
be the solution operator of the following Dirichlet biharmonic two-point
boundary value problem: for given $f\in L^2(D)$ find $v_{\ssy B}\in
{\bfdot H}^4(D)$ such that
\begin{equation}\label{ElOp2}
v_{\ssy B}''''=f\quad\text{\rm in}\ken D,
\end{equation}
i.e. $T_{\ssy B}f:=v_{\ssy B}$. It is well-known that, for $m\in{\mathbb N}_0$, there
exists a constant $C^{m}_{\ssy B}>0$ such that
\begin{equation}\label{ElBihar2}
\|T_{\ssy B}f\|_{\ssy m,D}\leq \,C^m_{\ssy B}\,\|f\|_{\ssy m-4, D}
\quad\forall\,f\in H^{\max\{0,m-4\}}(D).
\end{equation}
Due to the type of boundary conditions of \eqref{ElOp2}, we have
\begin{equation}\label{tiger007}
T_{\ssy B}f= T_{\ssy E}^2f\quad\forall\,f\in L^2(D),
\end{equation}
which, after using \eqref{TE_Prop}, yields
\begin{equation}\label{TB-prop1}
(T_{\ssy B}v_1,v_2)_{\ssy 0,D} =(T_{\ssy E}v_1,T_{\ssy
E}v_2)_{\ssy 0,D}=(v_1,T_{\ssy B}v_2)_{\ssy 0,D}
\quad\forall\,v_1,v_2\in L^2(D).
\end{equation}
%
%
%
%
%
%
%
\par
Let $({\mathcal S}(t)w_0)_{\ssy t\in[0,T]}$ be the standard
semigroup notation for the solution $w$ to \eqref{Det_Parab}.
Then (see Appendix~A in \cite{KZ2013b}) for $\ell\in{\mathbb N}_0$,
$\beta\ge0$ and $p\ge0$, there exists a constant
${\mathcal C}_{\beta,\ell,\mu,\mu^2 T}>0$ such that
\begin{equation}\label{Reggo3}
\int_{t_a}^{t_b}(\tau-t_a)^{\beta}\,
\big\|\partial_t^{\ell}{\mathcal S}(\tau)w_0 \big\|_{\ssy {\bfdot
H}^p}^2\,d\tau \leq\,{\mathcal C}_{\beta,\ell,\mu,\mu^2 T}\, \|w_0\|^2_{\ssy {\bfdot
H}^{p+4\ell-2\beta-2}}
\end{equation}
for all $w_0\in{\bfdot H}^{p+4\ell-2\beta-2}(D)$ and $t_a$, $t_b\in[0,T]$ with $t_b>t_a$.
%
%
%
%
\subsection{Discrete operators}
%
%
Let $r=2$ or $3$,  and ${\sf M}_h^r\subset H_0^1(D)\cap H^2(D)$
be a finite element space consisting of functions which are
piecewise polynomials of degree at most $r$ over a partition of
$D$ in intervals with maximum length $h$. It is well-known (cf.,
e.g., \cite{BrHilbert1970}) that
\begin{equation}\label{Sh_H2}
\inf_{\chi\in{\sf M}_h^r} \|v-\chi\|_{\ssy 2,D} \leq\,C_{r}\,h^{s-2}\,\|v\|_{\ssy s,D}
\quad\,\forall\,v\in H^{s+1}(D)\cap H_0^1(D),\quad\,s=3,\dots,r+1,
\end{equation}
where $C_{r}$ is a positive constant that depends on $r$ and $D$, and is independent of $h$ and $v$.
Then, we define the discrete biharmonic operator $B_h:{\sf M}_h^r\to{\sf M}_h^r$ by
$(B_h\varphi,\chi)_{\ssy 0,D}=(\partial_x^2\varphi,\partial_x^2\chi)_{\ssy 0,D}$
for $\varphi,\chi\in{\sf M}_h^r$, the $L^2(D)-$projection operator
$P_h:L^2(D)\to{\sf M}_h^r$  by
$(P_hf,\chi)_{\ssy 0,D}=(f,\chi)_{\ssy 0,D}$ for $\chi\in{\sf M}_h^r $ and 
$f\in L^2(D)$,  and the standard Galerkin finite element approximation
$v_{{\ssy B},h}\in{\sf M}_h^r$ of the solution
$v_{\ssy B}$ to \eqref{ElOp2} by requiring
\begin{equation}\label{April2017_1}
B_h(v_{{\ssy B},h})=P_hf.
\end{equation}
\par
Let $T_{{\ssy B},h}:L^2(D)\to{{\sf M}^r_h}$ be the solution operator of the 
finite element method \eqref{April2017_1}, i.e. $T_{{\ssy B},h}f:=v_{{\ssy B},h}=B_h^{-1}P_hf$
for all $f\in L^2(D)$. Then, we can easily conclude that
\begin{equation}\label{adjo2}
\left(T_{{\ssy B},h}f,g\right)_{\ssy 0,D}=\left(\partial_x^2\left(T_{{\ssy B},h}f\right),
\partial_x^2\left(T_{{\ssy B},h}g\right)\right)_{\ssy 0,D}
=\left(f,T_{{\ssy B},h}g\right)_{\ssy 0,D} \quad\forall\,f,g\in L^2(D)
\end{equation}
and
\begin{equation}\label{TB_bound}
\|\partial_x^2(T_{\ssy B,h}f)\|_{\ssy 0,D}
\leq\,C\,\|f\|_{\ssy -2,D}\quad\forall\,f\in L^2(D).
\end{equation}
\par
Finally, the approximation property \eqref{Sh_H2} of the finite element space
${\sf M}_h^r$ yields (see, e.g., Proposition 2.2 in \cite{KZ2010}) the following
error estimate
\begin{equation}\label{ARA1}
\|T_{\ssy B}f-T_{{\ssy B},h}f\|_{\ssy 0,D} \leq\,C\,
h^{r}\,\|f\|_{\ssy -1,D}\quad\forall\,f\in L^2(D),\quad r=2,3.
\end{equation}
%
%
%
%
%
%
%
\section{An approximation estimate for the canvas problem solution}\label{SECTION2}
Here, we establish the convergence of $\uu$ towards $u$ with respect to the
$L^{\infty}_t(L^2_{\ssy P}(L^2_x))$ norm,
when $\Delta{t}\rightarrow0$ and ${\sf M}\rightarrow\infty$  (cf. \cite{KZ2010}, \cite{KZ2013b}).
%
%
\begin{theorem}\label{BIG_Qewrhma1}
Let $u$ be the solution to \eqref{PARAB}, $\uu$ be the
solution to \eqref{AC2} and $\kappa\in{\mathbb N}$ such that $\kappa^2\,\pi^2>\mu$.
Then, there exists a constant ${\widehat c}_{\ssy{\rm CER}}>0$,
independent of $\Delta{t}$ and ${\sf M}$, such that
\begin{equation}\label{ModelError}
\max_{\ssy[0,T]}{\sf\Theta}
\leq\,{\widehat c}_{\ssy{\rm CER}}\,\left(\,\Delta{t}^\frac{1}{8}
+{\sf M}^{-\frac{1}{2}}\,\right)\quad\forall\,{\sf M}\ge\kappa,
\end{equation}
where ${\sf\Theta}(t):=\left({\mathbb E}\left[\|u(t,\cdot)-\uu(t,\cdot)\|_{\ssy 0,D}^2
\right]\right)^{\half}$ for $t\in[0,T]$.
\end{theorem}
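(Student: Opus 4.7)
The plan is to write both $u(t,\cdot)$ and $\uu(t,\cdot)$ as stochastic integrals against $dW$, so that the error becomes an Itô integral and the Itô isometry can be applied. For fixed $t\in[0,T]$ and $x\in\overline D$, set
\begin{equation*}
K_t(s,x,y):=\mathcal{X}_{\ssy(0,t)}(s)\,{\sf\Psi}_{t-s}(x,y),\qquad (s,y)\in\mathcal{O}.
\end{equation*}
Then \eqref{MildSol} gives $u(t,x)=\int_0^T\!\!\int_D K_t(s,x,y)\,dW(s,y)$, while \eqref{HatUform} combined with \lemref{Lhmma1} (applied with $g=K_t(\cdot,x,\cdot)$) yields $\uu(t,x)=\int_0^T\!\!\int_D ({\sf\Pi}K_t)(s,x,y)\,dW(s,y)$. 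Hence $u-\uu$ is the Itô integral of $K_t-{\sf\Pi}K_t$, and the Itô isometry \eqref{Ito_Isom} combined with the kernel characterization \eqref{HSxar} gives
\begin{equation*}
{\sf\Theta}(t)^2=\int_0^T\!\!\iint_{D\times D}\bigl(K_t(s,x,y)-{\sf\Pi}K_t(s,x,y)\bigr)^2\,dx\,dy\,ds.
\end{equation*}

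Next I would exploit the spectral expansion \eqref{GreenKernel}. Inserting it in $K_t$ and recalling that ${\sf\Pi}$ truncates the $\varphi_i$-expansion at level ${\sf M}$ and replaces the time-factor by its piecewise constant $L^2$-average on each $T_n$, orthonormality of $(\varepsilon_k)$ in $L^2_x$ and of $(\varphi_k)$ in $L^2_y$ reduces the problem to a Parseval-style sum: with $\alpha_k:=\lambda_k^2(\lambda_k^2-\mu)$, $f_k(s):=e^{-\alpha_k(t-s)}\mathcal{X}_{\ssy(0,t)}(s)$ and $\bar f_k$ its $\mathfrak{S}_{\ssy{\sf N}}$-projection,
\begin{equation*}
{\sf\Theta}(t)^2=\underbrace{\sum_{k>{\sf M}}\lambda_k^2\int_0^T\! f_k^2\,ds}_{\text{tail in space}}\;+\;\underbrace{\sum_{k=1}^{{\sf M}}\lambda_k^2\int_0^T\!(f_k-\bar f_k)^2\,ds}_{\text{time averaging error}}.
\end{equation*}
The first sum is the easy piece: $\int_0^T f_k^2\,ds\le\frac{1}{2\alpha_k}$, and since ${\sf M}\ge\kappa$ ensures $\alpha_k\asymp\lambda_k^4$ for $k>{\sf M}$, it is bounded by $C\sum_{k>{\sf M}}k^{-2}\le C\,{\sf M}^{-1}$, producing the term $C\,{\sf M}^{-\half}$ in \eqref{ModelError}.

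The main obstacle is the second sum, because the naive ``smoothness'' bound $C\Delta{t}^2\alpha_k^2\|f_k\|_{L^2}^2$ produces $\lambda_k^2\alpha_k\Delta{t}^2\asymp k^6\Delta{t}^2$ which explodes with $k$. I would handle this by combining two complementary estimates for $E_k:=\int_0^T(f_k-\bar f_k)^2\,ds$, namely
\begin{equation*}
E_k\le\tfrac{1}{2\alpha_k}\quad\text{(trivial, from $L^2$-contractivity of ${\sf\Pi}$)}
\end{equation*}
and, splitting the interior intervals ($T_n\subset(0,t)$) from the single interval containing $t$,
\begin{equation*}
E_k\le C\bigl(\alpha_k\Delta{t}^2+\Delta{t}\bigr),
\end{equation*}
the first term coming from a Poincar\'e-type estimate using $f_k'=\alpha_k f_k$ and $\int_0^t f_k^2\le\frac{1}{2\alpha_k}$, the second from the boundary interval where $f_k$ is bounded pointwise by $1$ on length $\Delta{t}$. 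Then I would split the sum at the threshold $k^\star$ defined by $\alpha_{k^\star}\Delta{t}\simeq 1$, i.e.\ $k^\star\asymp\Delta{t}^{-1/4}$: for $k\le k^\star$ use $E_k\le C\Delta{t}$ so that $\sum_{k\le k^\star}\lambda_k^2 E_k\le C\Delta{t}(k^\star)^3\asymp\Delta{t}^{1/4}$, and for $k>k^\star$ use $E_k\le\frac{1}{2\alpha_k}$ so that $\sum_{k>k^\star}\lambda_k^2 E_k\le C(k^\star)^{-1}\asymp\Delta{t}^{1/4}$. This gives time error $\le C\Delta{t}^{1/4}$ in the squared norm, hence $C\Delta{t}^{1/8}$ in ${\sf\Theta}(t)$, and combining with the space tail yields \eqref{ModelError} uniformly in $t\in[0,T]$. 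The only delicate point beyond this balancing is the clean treatment of the interval that contains $t$ (where $f_k$ has a jump), but this is absorbed exactly by the trivial $\min(\Delta{t},\alpha_k^{-1})$ boundary contribution above.
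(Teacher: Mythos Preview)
Your proposal is correct and follows the same overall strategy as the paper: write $u-\uu$ as an It\^o integral of $K_t-{\sf\Pi}K_t$, apply the It\^o isometry and the kernel characterization \eqref{HSxar}, then exploit the spectral expansion \eqref{GreenKernel} and orthonormality of $(\varepsilon_k)$ and $(\varphi_k)$. Where you differ is in the decomposition and in the level of detail.

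The paper inserts the intermediate kernel obtained by time-averaging \emph{without} spectral truncation, and bounds $\Theta(t)\le\sqrt{\Theta_A(t)}+\sqrt{\Theta_B(t)}$ via the triangle inequality; your decomposition is the exact Parseval identity coming from orthogonality of ${\sf\Pi}$, which is cleaner. For the spectral tail the two arguments coincide essentially verbatim (compare your $\sum_{k>{\sf M}}\lambda_k^2\int f_k^2\le\tfrac12\sum_{k>{\sf M}}(\lambda_k^2-\mu)^{-1}$ with the paper's bound on $\Theta_B$). For the time-averaging error the paper simply invokes Theorem~3.1 of \cite{KZ2013b}, whereas you give a self-contained argument via the threshold splitting at $k^\star\asymp\Delta t^{-1/4}$; this is precisely the kind of balancing that underlies the cited result, so your proof is more informative here.

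One small point you should tighten: your bounds $E_k\le\tfrac{1}{2\alpha_k}$ and $E_k\le C(\alpha_k\Delta t^2+\Delta t)$ implicitly assume $\alpha_k>0$, which need not hold for the finitely many modes $k<\kappa$ (where $\lambda_k^2\le\mu$ is possible). For those modes $|f_k|\le e^{|\alpha_k|T}$ is bounded by a constant depending only on $\mu,T$, so the Poincar\'e argument on interior intervals plus the boundary-interval contribution still give $E_k\le C_{\mu,T}\,\Delta t$, and $\sum_{k<\kappa}\lambda_k^2E_k\le C_{\mu,T,\kappa}\,\Delta t$ is harmless. With that caveat, your proof goes through.
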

%
%
%
%
%
\begin{proof} In the sequel,  we will use the symbol
$C$ to denote a generic constant that is independent of $\Delta{t}$
and ${\sf M}$ and may change value from one line to the other.
\par
Using \eqref{MildSol}, \eqref{HatUform} and Lemma~\ref{Lhmma1}, we
conclude that
\begin{equation}\label{corv00}
u(t,x)-\uu(t,x)=\int_0^{\ssy T}\!\!\!\int_{\ssy D}
\big[{\mathcal X}_{(0,t)}(s)\,{\sf\Psi}_{t-s}(x,y) -{\widetilde
{\sf\Psi}}(t,x;s,y)\big]\,dW(s,y)
\end{equation}
for $(t,x)\in[0,T]\times{\overline D}$,
where ${\widetilde{\sf\Psi}}: (0,T)\times{D}\rightarrow
L^2({\mathcal D})$ is given by
\begin{equation*}
{\widetilde{\sf\Psi}}(t,x;s,y):=\tfrac{1}{\Delta{t}}\sum_{i=1}^{\ssy{\sf M}}
\left[\,\int_{\ssy T_n}{\mathcal X}_{(0,t)}(s')
\left(\int_{\ssy D}{\sf\Psi}_{t-s'}(x,y')\,\varphi_i(y')\,dy'\right)ds'\right]\varphi_i(y)
\end{equation*}
for $(s,y)\in T_n\times{D}$, $n=1,\dots,{\sf N}$, and for
$(t,x)\in(0,T]\times D$. Now, we use \eqref{GreenKernel} and the $L^2(D)-$orthogonality
of $(\varphi_k)_{k=1}^{\infty}$  to obtain
\begin{equation}\label{X_Nov_2015}
{\widetilde{\sf\Psi}}(t,x;s,y)
=\tfrac{1}{\Delta{t}}\,
\int_{\ssy T_n}{\mathcal X}_{(0,t)}(s')\left(\,
\sum_{i=1}^{\ssy{\sf M}}\lambda_i e^{-\lambda_i^2(\lambda_i^2-\mu)(t-s')}
\varepsilon_i(x)\,\varphi_i(y)\right)\,ds'
\end{equation}
for $(s,y)\in T_n\times{D}$, $n=1,\dots,{\sf N}$, and for
$(t,x)\in(0,T]\times D$. Also, we use \eqref{corv00}, \eqref{Ito_Isom}
and \eqref{HSxar}, to get
\begin{equation}\label{corv1}
\begin{split}
{\sf\Theta}(t)=&\,\left(\,\int_0^{\ssy T}\int_{\ssy D}\int_{\ssy D}
 \big[{\mathcal X}_{(0,t)}(s)\,{\sf\Psi}_{t-s}(x,y)
-{\widetilde{\sf\Psi}}(t,x;s,y)\big]^2\;dxdyds\,\right)^{\frac{1}{2}}\\
\leq&\,\sqrt{{\sf\Theta}_{\ssy A}(t)}+\sqrt{{\sf\Theta}_{\ssy B}(t)}\quad\forall\,t\in(0,T],\\
\end{split}
\end{equation}
where
\begin{equation*}\label{ZaZa}
{\sf\Theta}_{\ssy A}(t):=
\sum_{n=1}^{\ssy{\sf N}}\int_{\ssy D}\!\int_{\ssy D}\!\int_{\ssy T_n}
\left[{\mathcal X}_{(0,t)}(s)\,{\sf\Psi}_{t-s}(x,y)
-\tfrac{1}{\Delta{t}}\,\int_{\ssy T_n}
{\mathcal X}_{(0,t)}(s')\,{\sf\Psi}_{t-s'}(x,y)\,ds'\,\right]^2\;dxdyds
\end{equation*}
and
\begin{equation*}\label{ZbZb}
{\sf\Theta}_{\ssy B}(t):=\sum_{n=1}^{\ssy{\sf N}}
\int_{\ssy D}\int_{\ssy D}\int_{\ssy T_n}\left[\,
\tfrac{1}{\Delta{t}}\int_{\ssy T_n}{\mathcal X}_{(0,t)}(s')\,{\sf\Psi}_{t-s'}(x,y)\,ds'
-{\widetilde{\sf\Psi}}(t,x;s,y)\,\right]^2dxdyds.
\end{equation*}
Proceeding as in the proof of Theorem~3.1 in \cite{KZ2013b} we arrive at
\begin{equation}\label{ZaZa_bound}
\sqrt{{\sf\Theta}_{\ssy A}(t)}\leq\,C\,\Delta{t}^{\frac{1}{8}}\quad\forall\,t\in(0,T].
\end{equation}
Combining \eqref{ZbZb} and \eqref{X_Nov_2015}
and using the $L^2(D)-$orthogonality of $(\varepsilon_k)_{k=1}^{\infty}$
and  $(\varphi_k)_{k=1}^{\infty}$ we have
\begin{equation*}
\begin{split}
{\sf\Theta}_{\ssy B}(t)=&\,\tfrac{1}{\Delta{t}}\,
\sum_{n=1}^{\ssy{\sf N}}
\int_{\ssy D}\int_{\ssy D}\left[
\int_{\ssy T_n}{\mathcal X}_{(0,t)}(s')
\left(\,{\sf\Psi}_{t-s'}(x,y)-\sum_{i=1}^{\ssy{\sf M}}
\lambda_ie^{-\lambda_i^2(\lambda_i^2-\mu)(t-s')}
\varepsilon_i(x)\,\varphi_i(y)\right)ds'\right]^2dxdy\\
=&\,\tfrac{1}{\Delta{t}}\,
\sum_{n=1}^{\ssy{\sf N}}
\int_{\ssy D}\int_{\ssy D}\left[
\int_{\ssy T_n}{\mathcal X}_{(0,t)}(s')
\left(\,\sum_{i={\ssy{\sf M}}+1}^{\infty}
\lambda_ie^{-\lambda_i^2(\lambda_i^2-\mu)(t-s')}
\varepsilon_i(x)\,\varphi_i(y)\right)ds'
\right]^2dxdy\\
=&\,\tfrac{1}{\Delta{t}}\,
\sum_{n=1}^{\ssy{\sf N}}
\int_{\ssy D}\int_{\ssy D}\left[
\,\sum_{i={\ssy{\sf M}}+1}^{\infty}
\left( \int_{\ssy T_n}{\mathcal X}_{(0,t)}(s')
\,\lambda_ie^{-\lambda_i^2(\lambda_i^2-\mu)(t-s')}\;ds' \right)
\varepsilon_i(x)\,\varphi_i(y)
\right]^2dxdy\\
=&\,\tfrac{1}{\Delta{t}}\,
\sum_{n=1}^{\ssy{\sf N}}
\,\sum_{i={\ssy{\sf M}}+1}^{\infty}
\left( \int_{\ssy T_n}{\mathcal X}_{(0,t)}(s')\,\lambda_i\,e^{-\lambda_i^2(\lambda_i^2-\mu)(t-s')}\;ds' \right)^2
\quad\forall\,t\in(0,T].
\end{split}
\end{equation*}
For ${\sf M}\ge\kappa$, using the Cauchy-Schwarz inequality, we obtain
\begin{equation}\label{ZbZb_bound}
\begin{split}
\sqrt{{\sf\Theta}_{\ssy B}(t)}\leq&\,\left[ \,\sum_{i={\ssy{\sf M}}+1}^{\infty}
\lambda_i^2\,\left(\,\int_0^t\,e^{-2\,\lambda_i^2(\lambda_i^2-\mu)(t-s')}\;ds'\,\right)
\right]^{\frac{1}{2}}\\
\leq&\,\tfrac{1}{\sqrt{2}}\,\left(\,\sum_{i={\ssy{\sf M}}+1}^{\infty}
\tfrac{1}{\lambda_i^2-\mu}\,\right)^{\frac{1}{2}}\\
\leq&\,\tfrac{\kappa+1}{\sqrt{2+4\kappa}}
\,\left(\,\sum_{i={\ssy{\sf M}}+1}^{\infty}
\tfrac{1}{\lambda_i^2}\,\right)^{\frac{1}{2}}\\
\leq&\,\tfrac{\kappa+1}{\pi\,\sqrt{2+4\kappa}}
\,\left(\,\int_{\ssy{\sf M}}^{\infty}\tfrac{1}{x^2}\;dx\,\right)^{\frac{1}{2}}\\
\leq&\,\tfrac{\kappa+1}{\pi\,\sqrt{2+4\kappa}}\,{\sf M}^{-\half}\quad\forall\,t\in(0,T].\\
\end{split}
\end{equation}
\par
The error bound \eqref{ModelError} follows by observing that
$\Theta(0)=0$ and by combining the bounds \eqref{corv1},
\eqref{ZaZa_bound} and \eqref{ZbZb_bound}.
\end{proof}
%
%
%
%
\section{Deterministic Time-Discrete and Fully-Discrete Approximations}\label{Det_Section}
In this section we define and analyze auxiliary time-discrete and fully-discrete 
approximations  of the solution to the deterministic problem \eqref{Det_Parab}.
The results of the
convergence analysis will be used in Section~\ref{Stoch_Section}
for the derivation of  an error estimate for numerical approximations
of  $\uu$ introduced in Section~\ref{The_Numerical_Method}.
%
%
%
\subsection{Time-Discrete Approximations}\label{Det_TD}
We define an auxiliary  modified-IMEX time-discrete method
to approximate the solution $w$ to \eqref{Det_Parab}, which has the 
following structure: First sets
\begin{equation}\label{BEDet1}
W^0:=w_0
\end{equation}
and determines $W^1\in{\bfdot H}^4(D)$ by
\begin{equation}\label{BEDet2a}
W^1-W^{0} +\Delta{\tau}\,\partial_x^4W^1=0.
\end{equation}
Then, for $m=2,\dots,M$,  finds $W^m\in{\bfdot H}^4(D)$
such that
\begin{equation}\label{BEDet2}
W^m-W^{m-1}+\dtau\,\left(\,\partial_x^4W^m
+\mu\,\partial_x^2W^{m-1}\,\right)=0.
\end{equation}
\par
In the proposition below, we derive a low regularity priori error estimate
in a discrete in time $L^2_t(L^2_x)-$norm.
%
%
%
\begin{proposition}\label{DetPropo1}
Let $(W^m)_{m=0}^{\ssy M}$ be the time-discrete approximations
defined in \eqref{BEDet1}--\eqref{BEDet2},
and $w$ be the solution to the problem \eqref{Det_Parab}.
Then, there exists a constant $C>0$, independent of $\dtau$, such that
\begin{equation}\label{Nov2015_Propo1}
\left(\,\dtau\,\sum_{m=1}^{\ssy M}
\|W^m-w^m\|_{\ssy 0,D}^2 \,\right)^{\frac{1}{2}}
\leq\,C\,\dtau^{\theta} \,\|w_0\|_{\ssy{\bfdot H}^{4\theta-2}}
\quad\forall\,\theta\in[0,1],
\quad\forall\,w_0\in{\bfdot H}^2(D),
\end{equation}
where $w^{\ell}(\cdot):=w(\tau_{\ell},\cdot)$ for $\ell=0,\dots,M$.
\end{proposition}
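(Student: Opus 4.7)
My plan is to diagonalize the problem in the eigenbasis $(\varepsilon_k)_{k\geq 1}$ and reduce the estimate to a mode-wise statement. Writing $w_0=\sum_{k\geq 1}\hat c_k\varepsilon_k$ with $\hat c_k:=(w_0,\varepsilon_k)_{\ssy 0,D}$, the semigroup gives $w^m=\sum_k\hat c_k e^{-\omega_k\tau_m}\varepsilon_k$ with $\omega_k:=\lambda_k^2(\lambda_k^2-\mu)$; a direct unfolding of \eqref{BEDet1}--\eqref{BEDet2} yields $W^m=\sum_k\hat c_k R_k^m\varepsilon_k$ for $m\geq 1$, with $R_k^m:=\rho_k^{m-1}/(1+\dtau\lambda_k^4)$ and $\rho_k:=(1+\dtau\mu\lambda_k^2)/(1+\dtau\lambda_k^4)$. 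By Parseval, \eqref{Nov2015_Propo1} reduces to the mode-wise bound
\begin{equation*}
A_k:=\dtau\sum_{m=1}^M\bigl(R_k^m-e^{-\omega_k\tau_m}\bigr)^2\leq C\,\dtau^{2\theta}\,\lambda_k^{8\theta-4},\qquad k\geq 1,\ \theta\in[0,1],
\end{equation*}
since summing this weighted by $\hat c_k^2$ reconstructs exactly $C\,\dtau^{2\theta}\|w_0\|^2_{\ssy{\bfdot H}^{4\theta-2}}$ on the right.

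To obtain $A_k$ I plan to prove two endpoint bounds and then take the geometric mean using $\min(a,b)\leq a^\theta b^{1-\theta}$. For the smooth endpoint $A_k\leq C\dtau^2\lambda_k^4$, I would use the discrete Duhamel-type identity $\rho_k^{m}-e^{-m\dtau\omega_k}=\sum_{j=1}^{m}\rho_k^{m-j}(\rho_k-e^{-\dtau\omega_k})e^{-(j-1)\dtau\omega_k}$ together with the per-step consistency $|\rho_k-e^{-\dtau\omega_k}|\leq C\dtau^2\omega_k^2$ (a short Taylor expansion of both sides) and the unconditional stability $|\rho_k|\leq 1+C\mu^2\dtau$ (an AM--GM comparison of numerator and denominator). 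For the rough endpoint $A_k\leq C\lambda_k^{-4}$, I would separately sum the two geometric series $\dtau\sum_m(R_k^m)^2\leq\dtau\,(1+\dtau\lambda_k^4)^{-2}\,(1-\rho_k^2)^{-1}\leq C\lambda_k^{-4}$ (using $1-\rho_k^2\geq c\,\dtau\lambda_k^2\omega_k$ for large $k$) and $\dtau\sum_m e^{-2\omega_k\tau_m}\leq C\omega_k^{-1}\leq C\lambda_k^{-4}$, concluding via $(a-b)^2\leq 2a^2+2b^2$.

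The principal obstacle is the finite set of low-frequency modes with $\omega_k\leq 0$, which appear precisely when $\mu>0$ is large enough that $\lambda_k^2\leq\mu$. For those modes $e^{-\omega_k\tau_m}$ grows and $\rho_k$ may exceed one, so the smoothing argument for the rough endpoint breaks down; however, there are at most $\lceil\sqrt{\mu}/\pi\rceil$ such indices, all relevant factors stay bounded on $[0,T]$ by $e^{|\omega_k|T}$, and $\lambda_k^{-4}$ is bounded below over this finite set, so their contribution is absorbed into a $\mu$- and $T$-dependent constant. A secondary technicality is the modified first step, which yields $R_k^1=(1+\dtau\lambda_k^4)^{-1}$ without a $\rho_k$ factor: one must separately verify that $|R_k^1-e^{-\omega_k\dtau}|$ is compatible with both endpoint bounds, but this reduces to elementary inequalities for $(1+x)^{-1}$ versus $e^{-x+y}$ on the relevant ranges. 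Once the two endpoint bounds are in hand, the interpolation and the summation over $k$ are immediate.
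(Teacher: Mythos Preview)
Your spectral approach is sound and genuinely different from the paper's. The paper never diagonalizes: it tests the error equation $T_{\ssy B}({\sf E}^m-{\sf E}^{m-1})+\dtau\,({\sf E}^m+\mu\,T_{\ssy E}{\sf E}^{m-1})=\sigma_m$ against ${\sf E}^m$, runs a discrete Gronwall argument in $\|T_{\ssy E}{\sf E}^m\|_{\ssy 0,D}$, and obtains the $\theta=1$ endpoint from the consistency residuals $\sigma_m$ together with the smoothing bound \eqref{Reggo3}; the $\theta=0$ endpoint follows from separate ${\bfdot H}^{-2}$ stability bounds on $(W^m)$ and on $(w^m)$, and general $\theta\in[0,1]$ is then obtained by operator interpolation between ${\bfdot H}^{-2}$ and ${\bfdot H}^2$. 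Your route instead reduces everything to the scalar inequalities $A_k\leq C\min(\dtau^2\lambda_k^4,\lambda_k^{-4})$ and replaces operator interpolation by the elementary $\min(a,b)\leq a^\theta b^{1-\theta}$. The energy approach is more robust --- it extends verbatim to operators that are not diagonal in $(\varepsilon_k)$, which is precisely what is needed in the companion finite element estimate of Proposition~\ref{DetPropo2} --- whereas your argument is more explicit and sidesteps any appeal to abstract interpolation theory. Two minor slips to repair: the lower bound $1-\rho_k^2\geq c\,\dtau\lambda_k^2\omega_k$ fails when $\dtau\lambda_k^4$ is large (the correct inequality is $1-\rho_k^2\geq c\,\dtau\omega_k/(1+\dtau\lambda_k^4)$, which still gives $\dtau\sum_m(R_k^m)^2\leq C\omega_k^{-1}\leq C\lambda_k^{-4}$); and the leading consistency term is $\rho_k-e^{-\dtau\omega_k}\approx\tfrac{1}{2}\dtau^2\lambda_k^4(\lambda_k^4-\mu^2)$ rather than $O(\dtau^2\omega_k^2)$, though the discrepancy is only a bounded $\mu$-dependent factor away from the finitely many low modes you already handle separately.
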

%
%
%
%
%
%
%
%
\begin{proof}
In the sequel,  we will use the symbol $C$ to denote a generic constant that is
independent of $\dtau$ and may changes value from one line to the other.
\par
Let ${\sf E}^m:=w^m-W^m$ for $m=0,\dots,M$, and
\begin{equation*}
\sigma_{m}(\cdot):=\int_{\ssy\Delta_m}
\left(\,w(\tau_m,\cdot)-w(\tau,\cdot)\,\right)\,d\tau\,
+\mu\,\int_{\ssy\Delta_m}
T_{\ssy E}\left(\,w(\tau_{m-1},\cdot)-w(\tau,\cdot)\,\right)\,d\tau
\end{equation*}
for $m=1,\dots,M$.
Thus, combining \eqref{Det_Parab},  \eqref{BEDet2a} and \eqref{BEDet2},
we conclude that
\begin{equation}\label{Nov2015_1}
T_{\ssy B}({\sf E}^1-{\sf E}^0)+\dtau\,{\sf E}^1=\sigma_1-\dtau\,\mu\,T_{\ssy E}w_0,
\end{equation}
\begin{equation}\label{Nov2015_2}
T_{\ssy B}({\sf E}^m-{\sf E}^{m-1})+\dtau\,\left(\,{\sf E}^m+\mu\,T_{\ssy
E}{\sf E}^{m-1}\,\right) =\sigma_m,\quad m=2,\dots,M.
\end{equation}
\par
First take the $L^2(D)-$inner product of both sides of
\eqref{Nov2015_1} with ${\sf E}^1$ and of 
\eqref{Nov2015_2} with ${\sf E}^m$, and then
use \eqref{TB-prop1} to obtain
\begin{equation*}
(T_{\ssy E}{\sf E}^1-T_{\ssy E}{\sf E}^0,T_{\ssy E}{\sf E}^1)_{\ssy 0,D}
+\dtau\,\|{\sf E}^1\|_{\ssy 0,D}^2=(\sigma_1,{\sf E}^1)_{\ssy 0,D}
-\dtau\,\mu\,(T_{\ssy E}w_0,{\sf E}^1)_{\ssy 0,D},
\end{equation*}
\begin{equation*}
(T_{\ssy E}{\sf E}^m-T_{\ssy E}{\sf E}^{m-1},T_{\ssy E}{\sf E}^m)_{\ssy 0,D}
+\dtau\,\|{\sf E}^m\|_{\ssy 0,D}^2=
-\mu\,\dtau\,(T_{\ssy E}{\sf E}^{m-1},{\sf E}^m)_{\ssy 0,D}
+(\sigma_m,{\sf E}^m)_{\ssy 0,D}
\end{equation*}
for $m=2,\dots,M$. Then, using that ${\sf E}^0=0$ and applying
\eqref{innerproduct} along with the arithmetic mean inequality, we get
\begin{equation}\label{Nov2015_3}
\|T_{\ssy E}{\sf E}^1\|_{\ssy 0,D}^2
+\dtau\,\|{\sf E}^1\|_{\ssy 0,D}^2
\leq\dtau^{-1}\,\|\sigma_1\|_{\ssy 0,D}^2
-2\,\dtau\,\mu\,(T_{\ssy E}w_0,{\sf E}^1)_{\ssy 0,D},
\end{equation}
\begin{equation}\label{Nov2015_4}
\|T_{\ssy E}{\sf E}^m\|_{\ssy 0,D}^2
+\tfrac{1}{2}\,\dtau\,\|{\sf E}^m\|_{\ssy 0,D}^2\leq
(1+2\,\mu^2\,\dtau)\,\|T_{\ssy E}{\sf E}^{m-1}\|_{\ssy 0,D}^2
+\dtau^{-1}\,\|\sigma_m\|_{\ssy 0,D}^2,\quad m=2,\dots,M.
\end{equation}
Observing that \eqref{Nov2015_4} yields
\begin{equation*}\label{Nov2015_7}
\|T_{\ssy E}{\sf E}^m\|_{\ssy 0,D}^2\leq\,(1+2\,\mu^2\,\dtau)
\,\|T_{\ssy E}{\sf E}^{m-1}\|_{\ssy 0,D}^2
+\dtau^{-1}\,\|\sigma_m\|_{\ssy 0,D}^2,\quad
m=2,\dots,M,
\end{equation*}
we use a standard discrete Gronwall argument to arrive at
\begin{equation}\label{Nov2015_8}
\max_{1\leq{m}\leq{\ssy M}}\|T_{\ssy E}{\sf E}^m\|_{\ssy 0,D}^2
\leq\,C\,\left(\,\|T_{\ssy E}{\sf E}^1\|_{\ssy 0,D}^2
+\dtau^{-1}\,\sum_{m=2}^{\ssy M}\,\|\sigma_m\|_{\ssy 0,D}^2\,\right).
\end{equation}
Summing both sides of \eqref{Nov2015_4} with respect to $m$,
from 2 up to $M$,  we obtain
\begin{equation*}
\|T_{\ssy E}{\sf E}^{\ssy M}\|_{\ssy 0,D}^2
+\tfrac{\dtau}{2}\,\sum_{m=2}^{\ssy M}\|{\sf E}^m\|_{\ssy
0,D}^2\leq\|T_{\ssy E}{\sf E}^1\|_{\ssy 0,D}^2
+2\,\mu^2\,\dtau\,\sum_{m=1}^{\ssy M-1}\|T_{\ssy E}{\sf E}^{m}\|_{\ssy 0,D}^2
+\dtau^{-1}\sum_{m=2}^{\ssy M}\|\sigma_m\|_{\ssy 0,D}^2,
\end{equation*}
which, along with \eqref{Nov2015_8}, yields
\begin{equation}\label{Nov2015_9}
\dtau\,\sum_{m=1}^{\ssy M}\|{\sf E}^m\|_{\ssy 0,D}^2\leq\,C\,\left(\,
\|T_{\ssy E}{\sf E}^1\|_{\ssy 0,D}^2+\dtau\,\|{\sf E}^1\|^2_{\ssy 0,D}
+\dtau^{-1}\,\sum_{m=2}^{\ssy M}\|\sigma_m\|_{\ssy 0,D}^2\,\right).
\end{equation}
Using \eqref{Nov2015_3}, \eqref{TE_Prop},
the Cauchy-Schwarz inequality and the arithmetic mean inequality, we have
\begin{equation*}
\begin{split}
\|T_{\ssy E}{\sf E}^1\|_{\ssy 0,D}^2
+\dtau\,\|{\sf E}^1\|_{\ssy 0,D}^2\leq&\,\dtau^{-1}\,\|\sigma_1\|_{\ssy 0,D}^2
-2\,\dtau\,\mu\,(w_0,T_{\ssy E}{\sf E}^1)_{\ssy 0,D}\\
\leq&\,\dtau^{-1}\,\|\sigma_1\|_{\ssy 0,D}^2
+2\,\dtau\,|\mu|\,\|w_0\|_{\ssy 0,D}\,\|T_{\ssy E}{\sf E}^1\|_{\ssy 0,D}\\
\leq&\,\dtau^{-1}\,\|\sigma_1\|_{\ssy 0,D}^2
+\tfrac{1}{2}\,\|T_{\ssy E}{\sf E}^1\|_{\ssy 0,D}^2
+2\,\dtau^2\,\mu^2\,\|w_0\|^2_{\ssy 0,D}\\
\end{split}
\end{equation*} 
which, finally, yields
\begin{equation}\label{Nov2015_10}
\|T_{\ssy E}{\sf E}^1\|_{\ssy 0,D}^2
+\dtau\,\|{\sf E}^1\|_{\ssy 0,D}^2\leq\,C\,\left(\,\dtau^2\,\|w_0\|^2_{\ssy 0,D}
+\dtau^{-1}\,\|\sigma_1\|_{\ssy 0,D}^2\,\right).
\end{equation}
Next, we use the Cauchy-Schwarz inequality and
\eqref{ElReg1} to get
\begin{equation}\label{Nov2015_11}
\begin{split}
\|\sigma_m\|_{\ssy 0,D}^2\leq&\,2\,\dtau^3\,
\int_{\ssy\Delta_m}\|\partial_{\tau}w(s,\cdot)\|_{\ssy 0,D}^2\;ds
+2\,\mu^2\,\dtau^3\,
\int_{\ssy\Delta_m}\|T_{\ssy E}(\partial_{\tau}w(s,\cdot))\|_{\ssy 0,D}^2\;ds\\
\leq&\,C\,(\Delta\tau)^{3}\,\int_{\ssy\Delta_m}
\|\partial_{\tau}w(s,\cdot)\|_{\ssy 0,D}^2\,ds,
\quad m=1,\dots,M.\\
\end{split}
\end{equation}
Finally, we use \eqref{Nov2015_9}, \eqref{Nov2015_10}, \eqref{Nov2015_11}
and \eqref{Reggo3} (with $\beta=0$, $\ell=1$, $p=0$) to obtain
\begin{equation*}\label{Nov2015_12}
\begin{split}
\dtau\,\sum_{m=1}^{\ssy M}\|{\sf E}^m\|_{\ssy 0,D}^2
\leq&\,C\,\left(\,
\dtau^2\,\|w_0\|_{\ssy 0,D}^2
+\dtau^{-1}\,\sum_{m=1}^{\ssy M}\|\sigma_m\|_{\ssy 0,D}^2\,\right)\\
\leq&\,C\,\left(\,
\dtau^2\,\|w_0\|_{\ssy 0,D}^2
+\dtau^2\,\int_0^{\ssy T}\|\partial_{\tau}w(s,\cdot)\|_{\ssy 0,D}^2\,ds\,\right)\\
&\leq\,C\,\dtau^2\,\|w_0\|^2_{\ssy{\bfdot H}^2},\\
\end{split}
\end{equation*}
which establishes \eqref{Nov2015_Propo1} for $\theta=1$.
\par
From \eqref{BEDet2a}, \eqref{BEDet2} and \eqref{tiger007} follows that
\begin{equation*}
T_{\ssy B}(W^1-W^0)+\dtau\,W^1=0,
\end{equation*}
\begin{equation*}
T_{\ssy B}(W^m-W^{m-1})+\dtau\,\left(\,W^m+\mu\,T_{\ssy E}W^{m-1}\,\right)=0,
\quad m=2,\dots,M.
\end{equation*}
Taking the $L^2(D)-$inner product of both sides of the first 
equation above with $W^1$ and of the second one with
$W^m$, and then applying \eqref{TB-prop1}, \eqref{innerproduct}
and the arithmetic mean inequality, we obtain
\begin{equation}\label{Nov2015_20}
\|T_{\ssy E}W^1\|_{\ssy 0,D}^2 -\|T_{\ssy E}W^0\|_{\ssy
0,D}^2+2\,\dtau\,\|W^1\|_{\ssy 0,D}^2\leq\,0,
\end{equation}
\begin{equation}\label{Nov2015_21}
\|T_{\ssy E}W^m\|_{\ssy 0,D}^2 -\|T_{\ssy E}W^{m-1}\|_{\ssy 0,D}^2
+\dtau\,\|W^m\|_{\ssy 0,D}^2\leq\,\mu^2\,\dtau\,\|T_{\ssy E}W^{m-1}\|_{\ssy
0,D}^2,\quad m=2,\dots,M.
\end{equation}
The inequalities \eqref{Nov2015_20} and \eqref{Nov2015_21}, easily, yield that
\begin{equation*}
\|T_{\ssy E}W^m\|_{\ssy
0,D}^2\leq\,(1+\mu^2\,\Delta\tau)\,\|T_{\ssy E}W^{m-1}\|^2_{\ssy
0,D},\quad m=1,\dots,M,
\end{equation*}
from which, after the use of a standard discrete Gronwall argument, we arrive at
\begin{equation}\label{Nov2015_22}
\max_{0\leq{m}\leq{\ssy M}}\|T_{\ssy E}W^m\|_{\ssy 0,D}^2
\leq\,C\,\|T_{\ssy E}W^0\|_{\ssy 0,D}^2.
\end{equation}
We sum both sides of \eqref{Nov2015_21} with respect to $m$, from $2$ up to $M$,
and then use \eqref{Nov2015_22}, to have
\begin{equation}\label{Nov2015_23}
\begin{split}
\dtau\,\sum_{m=2}^{\ssy M}\|W^m\|_{\ssy
0,D}^2\leq&\,\|T_{\ssy E}W^1\|_{\ssy0,D}^2
+\mu^2\,\dtau\,\sum_{m=1}^{\ssy M-1}\|T_{\ssy E}W^m\|_{\ssy0,D}^2\\
\leq&\,C\,\left(\,\|T_{\ssy E}W^1\|_{\ssy0,D}^2
+\|T_{\ssy E}W^0\|_{\ssy0,D}^2\,\right).\\
\end{split}
\end{equation} 
Thus, using \eqref{Nov2015_23}, \eqref{Nov2015_20},
\eqref{BEDet1}, \eqref{ElReg1}  and
\eqref{minus_equiv} we obtain
\begin{equation}\label{Nov2015_24}
\begin{split}
\dtau\,\sum_{m=1}^{\ssy M}\|W^m\|_{\ssy 0,D}^2
\leq&\,C\,\left(\,\|T_{\ssy E}W^1\|_{\ssy 0,D}^2+
\dtau\,\|W^1\|_{\ssy 0,D}^2+\|T_{\ssy E}w_0\|_{\ssy 0,D}^2\,\right)\\
\leq&\,C\,\|T_{\ssy E}w_0\|_{\ssy 0, D}^2\\
\leq&\,C\,\|w_0\|_{\ssy -2, D}^2\\
\leq&\,C\,\|w_0\|_{\ssy{\bfdot H}^{-2}}^2.\\
\end{split}
\end{equation}
In addition we have
\begin{equation*}
\begin{split}
\dtau\,\sum_{m=1}^{\ssy M}\|w^m\|_{\ssy 0,D}^2
=&\,\sum_{m=1}^{\ssy M}\,\int_{\ssy D}\,
\left(\,\int_{\ssy\Delta_m}\partial_{\tau}\left[\,(\tau-\tau_{m-1})
\,w^2(\tau,x)\,\right]\,d\tau\,\right)\,dx\\
=&\,\sum_{m=1}^{\ssy M}\,\int_{\ssy D}\,
\left(\int_{\ssy\Delta_m}\left[\,w^2(\tau,x)
+2\,(\tau-\tau_{m-1})\,w_{\tau}(\tau,x)\,w(\tau,x)\,\right]
\,d\tau\right)\,dx\\
\leq&\,\sum_{m=1}^{\ssy M}\int_{\ssy\Delta_m}\,
\left(\,2\,\|w(\tau,\cdot)\|_{\ssy 0,D}^2 +(\tau-\tau_{m-1})^2\,
\|w_{\tau}(\tau,\cdot)\|_{\ssy 0,D}^2\,\right)\;d\tau\\
\leq&\,2\,\int_0^{\ssy T}\|w(\tau,\cdot)\|_{\ssy 0,D}^2\;d\tau
+\int_0^{\ssy T}\tau^2\,\|w_{\tau}(\tau,\cdot)\|_{\ssy 0,D}^2\,d\tau,
\end{split}
\end{equation*}
which, along with \eqref{Reggo3} (with $(\beta,\ell,p)=(0,0,0)$
and $(\beta,\ell,p)=(2,1,0)$), yields
\begin{equation}\label{Nov2015_25}
\dtau\,\sum_{m=1}^{\ssy M}\|w^m\|_{\ssy 0,D}^2
\leq\,C\,\|w_0\|^2_{\ssy {\bfdot H}^{-2}}.
\end{equation}
Thus, \eqref{Nov2015_24} and \eqref{Nov2015_25}
establish \eqref{Nov2015_Propo1} for $\theta=0$.
\par
Finally, the estimate \eqref{Nov2015_Propo1} follows by interpolation,
since it is valid for $\theta=1$ and $\theta=0$.
\end{proof}
\par
We close this section by deriving, for later use, the following a priori bound.
%
%
%
%
\begin{lemma}
Let $(W^m)_{m=0}^{\ssy M}$ be the time-discrete approximations defined
by \eqref{BEDet1}--\eqref{BEDet2}. Then,
there exist a constant $C>0$, independent of $\dtau$, such that
\begin{equation}\label{October2010_A}
\left(\,\dtau\,\sum_{m=1}^{\ssy M}
\|\partial_x^3W^m\|_{\ssy 0,D}^2\,\right)^{\frac{1}{2}}
\leq\,C\,\|w_0\|_{\ssy{\bfdot H}^{1}}
\quad\forall\,w_0\in{\bfdot H}^1(D).
\end{equation}
\end{lemma}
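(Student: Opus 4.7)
The plan is to exploit discrete parabolic smoothing by testing the scheme with $-\partial_x^2 W^m$: each step then contributes a term $\dtau\,\|\partial_x^3 W^m\|_{\ssy 0,D}^2$, which is precisely the quantity whose time-sum we wish to control, while the initial datum enters only through $\|\partial_x W^0\|_{\ssy 0,D}\sim\|w_0\|_{\ssy{\bfdot H}^1}$ via the norm equivalence \eqref{H_equiv}. Since every $W^m\in{\bfdot H}^4(D)$ satisfies $W^m|_{\ssy\partial D}=\partial_x^2 W^m|_{\ssy\partial D}=0$, the integration-by-parts identities
\begin{equation*}
(\partial_x^4 W^m,-\partial_x^2 W^m)_{\ssy 0,D}=\|\partial_x^3 W^m\|_{\ssy 0,D}^2,\qquad
(\partial_x^2 W^{m-1},\partial_x^2 W^m)_{\ssy 0,D}=-(\partial_x W^{m-1},\partial_x^3 W^m)_{\ssy 0,D}
\end{equation*}
both hold with vanishing boundary contributions.

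For $m=1$, I would test \eqref{BEDet2a} with $-\partial_x^2 W^1$ and apply \eqref{innerproduct} in the $(\partial_x\cdot,\partial_x\cdot)_{\ssy 0,D}$ inner product to obtain, without any $\mu$-contribution, the clean bound $\|\partial_x W^1\|_{\ssy 0,D}^2+2\,\dtau\,\|\partial_x^3 W^1\|_{\ssy 0,D}^2\leq\|\partial_x W^0\|_{\ssy 0,D}^2$. For $m\geq 2$, the same test applied to \eqref{BEDet2} yields the corresponding energy identity, in which the only non-standard contribution is the mixed term $-\mu\,\dtau\,(\partial_x W^{m-1},\partial_x^3 W^m)_{\ssy 0,D}$ arising from the explicit treatment of the second-order operator (after the second identity above). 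Cauchy--Schwarz and Young's inequality split it as
\begin{equation*}
|\mu\,\dtau\,(\partial_x W^{m-1},\partial_x^3 W^m)_{\ssy 0,D}|
\leq\mu^2\,\dtau\,\|\partial_x W^{m-1}\|_{\ssy 0,D}^2+\tfrac{\dtau}{4}\,\|\partial_x^3 W^m\|_{\ssy 0,D}^2,
\end{equation*}
leaving a strictly positive coefficient in front of $\dtau\,\|\partial_x^3 W^m\|_{\ssy 0,D}^2$ on the left and producing a recursion of the shape
\begin{equation*}
\|\partial_x W^m\|_{\ssy 0,D}^2-\|\partial_x W^{m-1}\|_{\ssy 0,D}^2
+\tfrac{3\,\dtau}{2}\,\|\partial_x^3 W^m\|_{\ssy 0,D}^2
\leq 2\,\mu^2\,\dtau\,\|\partial_x W^{m-1}\|_{\ssy 0,D}^2.
\end{equation*}

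Dropping the middle term from this recursion gives $\|\partial_x W^m\|_{\ssy 0,D}^2\leq(1+2\mu^2\,\dtau)\,\|\partial_x W^{m-1}\|_{\ssy 0,D}^2$, and a standard discrete Gronwall argument (combined with the $m=1$ bound above) yields the uniform estimate $\max_{0\leq m\leq M}\|\partial_x W^m\|_{\ssy 0,D}^2\leq C\,\|\partial_x W^0\|_{\ssy 0,D}^2$. Next I would telescope the recursion from $m=2$ to $M$, bound the resulting sum $\dtau\sum_{m=1}^{M-1}\|\partial_x W^m\|_{\ssy 0,D}^2$ by $T$ times this maximum, add the $m=1$ contribution, and finally invoke \eqref{H_equiv} to produce \eqref{October2010_A}. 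The hard part, as I see it, is exactly the Young-inequality step for the $\mu$-term: the splitting constants must be tuned so that what is absorbed on the left is a strictly sub-unit fraction of $\dtau\,\|\partial_x^3 W^m\|_{\ssy 0,D}^2$, while what remains on the right is a clean multiple of $\|\partial_x W^{m-1}\|_{\ssy 0,D}^2$ alone, since this is the only structure to which the discrete Gronwall loop can be closed.
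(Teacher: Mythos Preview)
Your proposal is correct and follows essentially the same route as the paper: test the scheme against $\pm\partial_x^2 W^m$, integrate by parts (using $W^m|_{\ssy\partial D}=\partial_x^2 W^m|_{\ssy\partial D}=0$), apply \eqref{innerproduct} to the difference term, split the $\mu$-term by the arithmetic-mean inequality, run a discrete Gronwall argument on $\|\partial_x W^m\|_{\ssy 0,D}^2$, and telescope. The only differences are cosmetic---your Young constants leave $\tfrac{3}{2}\dtau$ on the left and $2\mu^2\dtau$ on the right, whereas the paper balances to get $\dtau$ and $\mu^2\dtau$---and do not affect the argument.
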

%
%
%
%
%
%
%
\begin{proof}
In the sequel,  we will use the symbol $C$ to denote a generic constant that is
independent of $\dtau$ and may changes value from one line to the other.
\par
Taking the $(\cdot,\cdot)_{\ssy 0,D}-$inner product of
\eqref{BEDet2} with $\partial_x^2W^m$
and of \eqref{BEDet2a} with $\partial_x^2W^1$,
and then integrating by parts, we obtain
\begin{equation}\label{Dec2015_11th_1}
\big(\partial_xW^1-\partial_xW^0, \partial_xW^1\big)_{\ssy 0,D}
+\dtau\,\|\partial_x^3W^1\|_{\ssy 0,D}^2=0,
\end{equation}
\begin{equation}\label{Dec2015_11th_2}
\big(\partial_xW^m-\partial_xW^{m-1}, \partial_xW^m\big)_{\ssy
0,D} +\Delta\tau\,\left[\,\|\partial_x^3W^m\|_{\ssy 0,D}^2
+\mu\,(\partial_x^3W^{m},\partial_xW^{m-1})_{\ssy 0,D}\,\right]=0
\end{equation}
for $m=2,\dots,M$. Using \eqref{innerproduct} and
the arithmetic mean inequality, from
\eqref{Dec2015_11th_1} and \eqref{Dec2015_11th_2}
follows that
\begin{equation}\label{Dec2015_11th_3}
\|\partial_xW^1\|^2_{\ssy 0,D}-\|\partial_xW^0\|_{\ssy
0,D}^2+2\,\dtau\,\|\partial_x^3W^1\|_{\ssy 0,D}^2\leq0,
\end{equation}
\begin{equation}\label{Dec2015_11th_4}
\|\partial_xW^m\|^2_{\ssy 0,D}-\|\partial_xW^{m-1}\|_{\ssy
0,D}^2+\dtau\,\|\partial_x^3W^m\|_{\ssy 0,D}^2\leq
\,\dtau\,\mu^2\,\|\partial_xW^{m-1}\|_{\ssy 0,D}^2,\quad
m=2,\dots,M.
\end{equation}
Now, \eqref{Dec2015_11th_4} and \eqref{Dec2015_11th_3}, easily, yield that
\begin{equation*}
\|\partial_xW^m\|_{\ssy 0,D}^2\leq\,(1+\mu^2\,\dtau)
\,\|\partial_xW^{m-1}\|_{\ssy 0,D}^2,\quad m=2,\dots,M,
\end{equation*}
which, after a standard induction argument, leads to
\begin{equation}\label{Dec2015_11th_5}
\max_{1\leq{m}\leq{\ssy M}}\|\partial_xW^m\|_{\ssy
0,D}^2\leq\,C\,\|\partial_xW^1\|_{\ssy 1,D}^2.
\end{equation}
After summing both sides of \eqref{Dec2015_11th_4}
with respect to $m$, from $2$ up to $M$, we obtain
\begin{equation*}
\dtau\,\sum_{m=2}^{\ssy M}\|\partial_x^3W^m\|_{\ssy
0,D}^2\leq\|\partial_xW^1\|_{\ssy
0,D}^2+\mu^2\,\dtau\,\sum_{m=1}^{\ssy M-1}
\|\partial_xW^{m}\|_{\ssy 0,D}^2
\end{equation*}
which, after using \eqref{Dec2015_11th_5}, yields
\begin{equation}\label{Dec2015_11th_6}
\dtau\,\sum_{m=1}^{\ssy M}\|\partial_x^3W^m\|_{\ssy
0,D}^2\leq\,C\,\left(\,
\|\partial_xW^1\|_{\ssy 0,D}^2
+\dtau\,\|\partial_x^3W^1\|_{\ssy 0,D}^2\,\right).
\end{equation}
Finally, we combine \eqref{Dec2015_11th_6}, \eqref{Dec2015_11th_3} and
\eqref{H_equiv} to get
\begin{equation*}
\begin{split}
\dtau\,\sum_{m=1}^{\ssy M}
\|\partial_x^3W^m\|_{\ssy 0,D}^2\leq&\,C\,\|\partial_xW^0\|_{\ssy 0,D}^2\\
\leq&\,C\,\|w_0\|_{\ssy 1,D}^2\\
\leq&\,C\,\|w_0\|_{\ssy{\bfdot H}^1}^2,\\
\end{split}
\end{equation*}
which, easily, yields \eqref{October2010_A}.
\end{proof}
%
%
%
%

%
\subsection{Fully-Discrete Approximations}
The modified-IMEX time-stepping method along with a finite element space
discretization yields a fully discrete method for the approximation of the
solution to the deterministic problem \eqref{Det_Parab}. The method begins by setting
\begin{equation}\label{IMEXFD_1}
W_h^0:=P_hw_0
\end{equation}
and specifing $W_h^1\in{\sf M}_h^r$ such that
\begin{equation}\label{IMEXFD_2}
W_h^1-W_h^0+\dtau\,B_hW_h^1=0.
\end{equation}
Then, for $m=2,\dots,M$, it finds $W_h^m\in{\sf M}_h^r$ such that
\begin{equation}\label{IMEXFD_3}
W_h^m-W_h^{m-1}+\dtau\,\left[\,B_hW_h^m+\mu\,P_h\left(\,
\partial_x^2W_h^{m-1}\,\right)\,\right]=0.
\end{equation}
\par
Adopting the viewpoint that the fully-discrete approximations defined above are approximations of the
time-discrete ones defined in the previous section, we estimate below the corresponding approximation
error in a discrete in time $L^2_t(L^2_x)-$norm. 
%
%
%
\begin{proposition}\label{DetPropo2}
Let $r=2$ or $3$, $(W^m)_{m=0}^{\ssy M}$ be the time discrete approxi\-mations
defined by \eqref{BEDet1}--\eqref{BEDet2}, and
$(W_h^m)_{m=0}^{\ssy M}\subset{\sf M}_h^r$ be the fully discrete
approximations specified in \eqref{IMEXFD_1}--\eqref{IMEXFD_3}.
Then, there exist a constant $C>0$, independent of $\dtau$ and $h$, such that
\begin{equation}\label{Nov2015_Propo2}
\left(\,\dtau\,\sum_{m=1}^{\ssy M}\|W^m-W_h^m\|^2_{\ssy 0,D}\,\right)^{\frac{1}{2}}
\leq\,C\,h^{r \theta}\,\|w_0\|_{\ssy {\bfdot H}^{3\theta-2}}
\quad\forall\,w_0\in {\bfdot H}^1(D),\quad\forall\,\theta\in[0,1].
\end{equation}
%
%
%
%
%
\end{proposition}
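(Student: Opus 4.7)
The plan is to follow the interpolation strategy used in the proof of Proposition~\ref{DetPropo1}: establish \eqref{Nov2015_Propo2} at the two endpoints $\theta=0$ and $\theta=1$, and then deduce the intermediate values by real interpolation applied to the linear map $w_0\mapsto(W^m-W_h^m)_{m=1}^{M}$, which sends ${\bfdot H}^s(D)$ continuously into the space of $L^2(D)$-valued sequences equipped with the norm $\bigl(\dtau\sum_m\|\cdot\|_{\ssy 0,D}^2\bigr)^{1/2}$.

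For $\theta=0$ I would use the triangle inequality to reduce to proving $\dtau\sum_m\|W_h^m\|_{\ssy 0,D}^2\le C\|w_0\|_{\ssy{\bfdot H}^{-2}}^2$, since the corresponding bound for $(W^m)$ is already contained in \eqref{Nov2015_24}. To obtain it, apply $T_{{\ssy B},h}$ to \eqref{IMEXFD_2} and \eqref{IMEXFD_3}: using $T_{{\ssy B},h}B_h\chi=\chi$ for $\chi\in{\sf M}_h^r$ and $T_{{\ssy B},h}P_h=T_{{\ssy B},h}$, the scheme rewrites as
\begin{equation*}
T_{{\ssy B},h}(W_h^m-W_h^{m-1})+\dtau\,W_h^m+\dtau\,\mu\,T_{{\ssy B},h}(\partial_x^2 W_h^{m-1})=0,\quad m=2,\dots,M,
\end{equation*}
with the obvious analogue at $m=1$. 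Taking the $(\cdot,W_h^m)_{\ssy 0,D}$ inner product and invoking \eqref{adjo2} to introduce the discrete energy $|\varphi|_h:=\|\partial_x^2 T_{{\ssy B},h}\varphi\|_{\ssy 0,D}$, after an integration by parts that converts the cross term into a form amenable to a bound of the type $|W_h^{m-1}|_h\cdot\|W_h^m\|_{\ssy 0,D}$, should yield a recursion $|W_h^m|_h^2+\dtau\,\|W_h^m\|_{\ssy 0,D}^2\le(1+\dtau\mu^2)|W_h^{m-1}|_h^2$ parallel to \eqref{Nov2015_20}--\eqref{Nov2015_21}. A discrete Gronwall argument combined with \eqref{TB_bound} and \eqref{minus_equiv} then delivers $\dtau\sum_m\|W_h^m\|_{\ssy 0,D}^2\le C\|\partial_x^2 T_{{\ssy B},h}w_0\|_{\ssy 0,D}^2\le C\|w_0\|_{\ssy{\bfdot H}^{-2}}^2$.

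For $\theta=1$ I would introduce the elliptic projection $R_h:{\bfdot H}^4(D)\to{\sf M}_h^r$ defined by $R_hv:=T_{{\ssy B},h}(\partial_x^4 v)$ (equivalently, $(\partial_x^2(v-R_hv),\partial_x^2\chi)_{\ssy 0,D}=0$ for every $\chi\in{\sf M}_h^r$), and split $W^m-W_h^m=\rho^m+\eta^m$ with $\rho^m:=W^m-R_hW^m$ and $\eta^m:=R_hW^m-W_h^m\in{\sf M}_h^r$. The projection error is controlled by \eqref{ARA1} together with the elementary bound $\|\partial_x^4 W^m\|_{\ssy -1,D}\le\|\partial_x^3 W^m\|_{\ssy 0,D}$ (one integration by parts against ${\bfdot H}^1$ test functions), giving $\|\rho^m\|_{\ssy 0,D}\le Ch^r\|\partial_x^3 W^m\|_{\ssy 0,D}$, which summed in time with \eqref{October2010_A} yields $\dtau\sum_m\|\rho^m\|_{\ssy 0,D}^2\le Ch^{2r}\|w_0\|_{\ssy{\bfdot H}^1}^2$. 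Subtracting the variational forms of \eqref{BEDet2a}--\eqref{BEDet2} and \eqref{IMEXFD_2}--\eqref{IMEXFD_3} tested against $\chi\in{\sf M}_h^r$, and using the orthogonality of $R_h$ to annihilate $(\partial_x^2\rho^m,\partial_x^2\chi)_{\ssy 0,D}$, produces the error equation
\begin{equation*}
(\eta^m-\eta^{m-1},\chi)_{\ssy 0,D}+\dtau\,(\partial_x^2\eta^m,\partial_x^2\chi)_{\ssy 0,D}+\dtau\,\mu\,(\partial_x^2\eta^{m-1},\chi)_{\ssy 0,D}=-(\rho^m-\rho^{m-1},\chi)_{\ssy 0,D}-\dtau\,\mu\,(\partial_x^2\rho^{m-1},\chi)_{\ssy 0,D}.
\end{equation*}
Testing with $\chi=\eta^m$, applying \eqref{innerproduct} and standard Cauchy--Schwarz/AM--GM inequalities, and closing with a discrete Gronwall argument should bound $\dtau\sum_m\|\eta^m\|_{\ssy 0,D}^2$ by $Ch^{2r}\|w_0\|_{\ssy{\bfdot H}^1}^2$, completing the $\theta=1$ case.

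The main obstacle will be controlling the right-hand side of the $\eta$-equation. The increment $(\rho^m-\rho^{m-1},\eta^m)_{\ssy 0,D}$ calls for a bound on time-increments of the projection error, which I plan to obtain by writing $\rho^m-\rho^{m-1}=(I-R_h)(W^m-W^{m-1})$ and combining \eqref{ARA1} with the discrete equation \eqref{BEDet2} to trade a factor of $\dtau$ against higher-order spatial derivatives of $W^m$. The term $\dtau\,\mu\,(\partial_x^2\rho^{m-1},\eta^m)_{\ssy 0,D}$ must be recast via integration by parts as $-\dtau\,\mu\,(\partial_x\rho^{m-1},\partial_x\eta^m)_{\ssy 0,D}$ and then balanced against the $\dtau\,\|\partial_x^2\eta^m\|_{\ssy 0,D}^2$ energy supplied by the implicit principal part; here the uniform a priori bound \eqref{October2010_A} on $\partial_x^3 W^m$ is crucial to extract the optimal $h^r\|w_0\|_{\ssy{\bfdot H}^1}$ rate after Gronwall.
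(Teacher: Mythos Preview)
Your $\theta=0$ argument mirrors the paper's exactly: triangle inequality, quote \eqref{Nov2015_24} for $(W^m)$, and derive a discrete $T_{{\ssy B},h}$-energy/Gronwall stability bound for $(W_h^m)$ to conclude $\dtau\sum_m\|W_h^m\|_{\ssy 0,D}^2\le C\|w_0\|_{\ssy{\bfdot H}^{-2}}^2$.

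The gap is in your $\theta=1$ argument, specifically the increment term $(\rho^m-\rho^{m-1},\eta^m)$. You propose to write $\rho^m-\rho^{m-1}=(T_{\ssy B}-T_{{\ssy B},h})\partial_x^4(W^m-W^{m-1})$ and, via \eqref{ARA1} and \eqref{BEDet2}, trade $\dtau$ for higher spatial derivatives. But \eqref{ARA1} gives $\|\rho^m-\rho^{m-1}\|_{\ssy 0,D}\le Ch^r\|\partial_x^3(W^m-W^{m-1})\|_{\ssy 0,D}$, and \eqref{BEDet2} converts this into $Ch^r\dtau(\|\partial_x^7 W^m\|_{\ssy 0,D}+|\mu|\,\|\partial_x^5 W^{m-1}\|_{\ssy 0,D})$. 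Closing the Gronwall would then require $\dtau\sum_m\|\partial_x^7 W^m\|_{\ssy 0,D}^2\le C\|w_0\|_{\ssy{\bfdot H}^1}^2$, which the smoothing of the IMEX resolvent simply does not deliver uniformly in $\dtau$ (a spectral calculation shows the high-frequency contribution blows up). Summation by parts is no cure either: moving the increment to $\eta$ costs you $\sum_m\|\rho^m\|_{\ssy 0,D}^2$ without the $\dtau$ weight, whereas \eqref{October2010_A} only provides $\dtau\sum_m\|\rho^m\|_{\ssy 0,D}^2\le Ch^{2r}\|w_0\|_{\ssy{\bfdot H}^1}^2$.

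The paper bypasses this entirely by \emph{not} splitting through the elliptic projection. It works with the total error ${\sf Z}^m:=W^m-W_h^m$ and, by applying $T_{{\ssy B},h}$ to both schemes and subtracting (using $T_{\ssy B}\partial_x^4 W^m=W^m$), obtains
\[
T_{{\ssy B},h}({\sf Z}^m-{\sf Z}^{m-1})+\dtau\bigl[{\sf Z}^m+\mu\,T_{{\ssy B},h}(\partial_x^2{\sf Z}^{m-1})\bigr]=\dtau\,\xi^m,
\qquad \xi^m:=(T_{\ssy B}-T_{{\ssy B},h})\partial_x^4 W^m.
\]
Note that $\xi^m$ is exactly your $\rho^m$, but now the right-hand side carries the factor $\dtau$ and \emph{no} time-increment of the projection error. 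Testing with ${\sf Z}^m$ and using \eqref{adjo2} yields a Gronwall recursion for $\Upsilon^m:=\|\partial_x^2(T_{{\ssy B},h}{\sf Z}^m)\|_{\ssy 0,D}^2+\dtau\|{\sf Z}^m\|_{\ssy 0,D}^2$; the explicit lower-order term is handled by splitting $\partial_x^2(T_{{\ssy B},h}{\sf Z}^m)=\partial_x^2(T_{{\ssy B},h}({\sf Z}^m-{\sf Z}^{m-1}))+\partial_x^2(T_{{\ssy B},h}{\sf Z}^{m-1})$ and absorbing the first piece into the increment energy that the $T_{{\ssy B},h}$-formulation supplies. This leads cleanly to $\dtau\sum_m\|{\sf Z}^m\|_{\ssy 0,D}^2\le C\,\dtau\sum_m\|\xi^m\|_{\ssy 0,D}^2\le Ch^{2r}\|w_0\|_{\ssy{\bfdot H}^1}^2$ via \eqref{ARA1} and \eqref{October2010_A}. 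If you want to rescue the elliptic-projection route, you would have to rederive the $\eta$-equation in $T_{{\ssy B},h}$-form as well, at which point it collapses into the paper's argument.
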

%
%
%
%
%
%
\begin{proof}
In the sequel,  we will use the symbol $C$ to denote a generic constant which is
independent of $\dtau$ and $h$, and may changes value from one line to the other.
\par
Let ${\sf Z}^m:=W^m-W_h^m$ for $m=0,\dots,M$. Then, from \eqref{BEDet2a}, \eqref{BEDet2},
\eqref{IMEXFD_2} and \eqref{IMEXFD_3}, we obtain the following error equations:
\begin{equation}\label{IMA_Lost_10}
T_{\ssy B,h}({\sf Z}^1-{\sf Z}^0)+\dtau\,{\sf Z}^1=\dtau\,\xi^1,
\end{equation}
\begin{equation}\label{IMA_Lost_11}
T_{\ssy B,h}({\sf Z}^m-{\sf Z}^{m-1})+\dtau\,\left[\,{\sf Z}^m
+\mu\,T_{\ssy B,h}(\partial_x^2{\sf Z}^{m-1})\,\right]=\dtau\,\xi^m, \quad m=2,\dots,M,
\end{equation}
where
\begin{equation}\label{IMA_Lost_12}
\xi^m:=(T_{\ssy B}-T_{{\ssy B},h})\partial_x^4W^m,\quad m=1,\dots,M.
\end{equation}
\par
Taking the $L^2(D)-$inner product of both sides of
\eqref{IMA_Lost_11} with ${\sf Z}^m$, we obtain
\begin{equation*}
\begin{split}
(T_{\ssy B,h}({\sf Z}^m-{\sf Z}^{m-1}),{\sf Z}^m)_{\ssy 0,D}
+\dtau\,\|{\sf Z}^m\|^2_{\ssy 0,D}=&\,-\mu\,\dtau\,\left(T_{\ssy
B,h}(\partial_x^2{\sf Z}^{m-1}),{\sf Z}^m\right)_{\ssy 0,D}\\
&+\Delta\tau\,(\xi^m,{\sf Z}^m)_{\ssy 0,D},
\quad m=2,\dots,M,\\
\end{split}
\end{equation*}
which, along with \eqref{adjo2} and \eqref{innerproduct}, yields
\begin{equation}\label{Kavalos15th_Veniz2}
\begin{split}
\|\partial_x^2(T_{\ssy B,h}{\sf Z}^m)\|_{\ssy 0,D}^2
-\|\partial_x^2(T_{\ssy B,h}{\sf Z}^{m-1})\|_{\ssy
0,D}^2&\,+\|\partial_x^2\left(T_{\ssy B,h}\left({\sf Z}^m-{\sf Z}^{m-1}\right)\right)\|_{\ssy 0,D}^2\\
&\,\hskip1.5truecm+2\,\dtau\,\|{\sf Z}^m\|^2_{\ssy 0,D}
={\mathcal A}_1^m+{\mathcal A}_2^m\\
\end{split}
\end{equation}
for $m=2,\dots,M$, where
\begin{equation*}
\begin{split}
{\mathcal A}^m_1:=&\,2\,\dtau\,(\xi^m,{\sf Z}^m)_{\ssy 0,D},\\
{\mathcal A}_2^m:=&\,-2\,\mu\,\dtau\,\left(T_{\ssy
B,h}\left(\partial_x^2{\sf Z}^{m-1}\right),{\sf Z}^m\right)_{\ssy 0,D}.\\
\end{split}
\end{equation*}
Using \eqref{adjo2},  integration by parts, 
the Cauchy-Schwarz inequality, the arithmetic mean inequality, we have
\begin{equation}\label{Dec2015_2}
{\mathcal A}_1^m\leq\dtau\,\left(\,\|{\sf Z}^m\|_{\ssy 0,D}^2+\|\xi^m\|^2_{\ssy 0,D}\,\right)
\end{equation}
and
\begin{equation}\label{Dec2015_1}
\begin{split}
{\mathcal A}_2^m=&\,-2\,\mu\,\dtau\,(\partial_x^2{\sf Z}^{m-1},T_{\ssy B,h}{\sf Z}^m)_{\ssy 0,D}\\
=&\,-2\,\mu\,\dtau\,({\sf Z}^{m-1},\partial_x^2(T_{\ssy B,h}{\sf Z}^m))_{\ssy 0,D}\\
=&\,-2\,\mu\,\dtau\,\left({\sf Z}^{m-1},\partial_x^2
\left(T_{\ssy B,h}\left({\sf Z}^m-{\sf Z}^{m-1}\right)\right)\right)_{\ssy 0,D}\\
&\quad\quad-2\,\mu\,\dtau\,({\sf Z}^{m-1},
\partial_x^2(T_{\ssy B,h}{\sf Z}^{m-1}))_{\ssy 0,D}\\
\leq&\,2\,|\mu|\,\dtau\,\|{\sf Z}^{m-1}\|_{\ssy 0,D}\,
\left\|\partial_x^2\left(T_{\ssy B,h}\left({\sf Z}^m-{\sf Z}^{m-1}\right)\right)\right\|_{\ssy 0,D}\\
&\quad\quad+2\,|\mu|\,\dtau\,\|{\sf Z}^{m-1}\|_{\ssy 0,D}\,
\left\|\partial_x^2\left(T_{\ssy B,h}{\sf Z}^{m-1}\right)\right\|_{\ssy 0,D}\\
\leq&\,\dtau^2\,\mu^2\,\|{\sf Z}^{m-1}\|^2_{\ssy 0,D}
+\|\partial_x^2\left(T_{\ssy B,h}\left({\sf Z}^m-{\sf Z}^{m-1}\right)\right)\|_{\ssy 0,D}^2\\
&\quad\quad+\tfrac{\dtau}{2}\,\|{\sf Z}^{m-1}\|^2_{\ssy 0,D}
+2\,\dtau\,\mu^2\,\|\partial_x^2(T_{\ssy B,h}{\sf Z}^{m-1})\|_{\ssy 0,D}^2,
\quad m=2,\dots,M.\\
\end{split}
\end{equation}
Now, we combine \eqref{Kavalos15th_Veniz2},
\eqref{Dec2015_2} and \eqref{Dec2015_1} to get
\begin{equation}\label{Dec2015_3}
\begin{split}
\|\partial_x^2(T_{\ssy B,h}{\sf Z}^m)\|_{\ssy 0,D}^2
+\dtau\,\|{\sf Z}^m\|^2_{\ssy 0,D}\leq&\,
\|\partial_x^2(T_{\ssy B,h}{\sf Z}^{m-1})\|_{\ssy 0,D}^2
+\tfrac{\dtau}{2}\,\|{\sf Z}^{m-1}\|_{\ssy 0,D}^2+\dtau\,\|\xi^m\|_{\ssy 0,D}^2\\
&+2\,\dtau\,\mu^2\,\left(\,\|\partial_x^2(T_{\ssy B,h}{\sf Z}^{m-1})\|_{\ssy 0,D}^2
+\dtau\,\|{\sf Z}^{m-1}\|_{\ssy 0,D}^2\,\right)\\
\end{split}
\end{equation}
for $m=2,\dots,M$.
Let
$\Upsilon^{\ell}:=\|\partial_x^2(T_{\ssy B,h}{\sf Z}^{\ell})\|_{\ssy
0,D}^2+\dtau\,\|{\sf Z}^{\ell}\|_{\ssy 0,D}^2$ for $\ell=1,\dots,M$.
Then \eqref{Dec2015_3} yields
\begin{equation*}
\Upsilon^m\leq(1+2\,\mu^2\,\dtau)\,\Upsilon^{m-1}
+\dtau\,\|\xi^m\|_{\ssy 0,D}^2,\quad m=2,\dots,M,
\end{equation*}
from which, after applying a standard discrete Gronwall argument, we conclude that
\begin{equation}\label{Dec2015_4}
\max_{1\leq{m}\leq{\ssy M}}
\Upsilon^m\leq\,C\left(\,\Upsilon^1
+\dtau\,\sum_{m=2}^{\ssy M}\|\xi^m\|_{\ssy 0,D}^2\,\right).
\end{equation}
Since $T_{\ssy B,h}{\sf Z}^0=0$, after taking the $L^2(D)-$inner product
of both sides of \eqref{IMA_Lost_10} with ${\sf Z}^1$, and then
using \eqref{adjo2} and the arithmetic mean inequality, we obtain
\begin{equation}\label{Dec2015_5}
\|\partial_x^2(T_{\ssy B,h}{\sf Z}^1)\|_{\ssy
0,D}^2+\tfrac{\dtau}{2}\,\|{\sf Z}^1\|_{\ssy
0,D}^2\leq\,\tfrac{\dtau}{2}\,\|\xi^1\|^2_{\ssy 0,D},
\end{equation}
which, along with \eqref{Dec2015_4}, yields
\begin{equation}\label{Dec2015_6}
\max_{1\leq{m}\leq{\ssy M}}\Upsilon^m\leq\,C\,\dtau\,\sum_{m=1}^{\ssy M}
\|\xi^m\|_{\ssy 0,D}^2.
\end{equation}
Now, summing both sides of \eqref{Dec2015_3} with respect to $m$,
from $2$ up to $M$, we obtain
\begin{equation*}
\begin{split}
\dtau\,\sum_{m=2}^{\ssy M}\|{\sf Z}^m\|_{\ssy 0,D}^2
\leq&\,\|\partial_x^2(T_{\ssy B,h}{\sf Z}^1)\|_{\ssy 0,D}^2
+\tfrac{\dtau}{2}\,\sum_{m=1}^{\ssy M-1}\|{\sf Z}^m\|_{\ssy 0,D}^2\\
&\quad+\dtau\,\sum_{m=2}^{\ssy M}\|\xi^m\|_{\ssy 0,D}^2
+2\,\mu^2\,\dtau\,\sum_{m=1}^{\ssy M-1}\Upsilon^{m},\\
\end{split}
\end{equation*}
which, along with \eqref{Dec2015_6}, yields
\begin{equation}\label{Dec2015_7}
\begin{split}
\tfrac{\dtau}{2}\,\sum_{m=1}^{\ssy M}\|{\sf Z}^m\|_{\ssy 0,D}^2
\leq&\|\partial_x^2(T_{\ssy B,h}{\sf Z}^1)\|_{\ssy 0,D}^2
+\dtau\,\|{\sf Z}^1\|_{\ssy 0,D}^2\\
&\quad+\dtau\,\sum_{m=2}^{\ssy M}\|\xi^m\|_{\ssy 0,D}^2
+2\,\mu^2\,\dtau\,\sum_{m=1}^{\ssy M-1}\Upsilon^m\\
\leq&\,C\,\left(\,\max_{1\leq{m}\leq{\ssy M}-1}\Upsilon^m
+\dtau\,\sum_{m=2}^{\ssy M}\|\xi^m\|_{\ssy 0,D}^2\,\right)\\
\leq&\,C\,\dtau\,\sum_{m=1}^{\ssy M}\|\xi^m\|_{\ssy 0,D}^2.\\
\end{split}
\end{equation}
Combining \eqref{Dec2015_7}, \eqref{IMA_Lost_12},
\eqref{ARA1} and \eqref{October2010_A}, we obtain
\begin{equation}\label{Dec2015_8}
\begin{split}
\dtau\,\sum_{m=1}^{\ssy M}\|{\sf Z}^m\|_{\ssy 0,D}^2
\leq&\,C\,h^{2 r}\,\dtau\,\sum_{m=1}^{\ssy M}\|\partial_x^3W^m\|_{\ssy 0,D}^2\\
\leq&\,C\,h^{2 r}\,\|w_0\|^2_{\ssy{\bfdot H}^1}.\\
\end{split}
\end{equation}
%
%
%
%
Thus, \eqref{Dec2015_8} yields \eqref{Nov2015_Propo2} for $\theta=1$.
\par
From \eqref{IMEXFD_2} and \eqref{IMEXFD_3} we conclude that
\begin{equation*}
T_{\ssy B,h}(W_h^1-W_h^0)+\dtau\,W_h^1=0,
\end{equation*}
\begin{equation*}
T_{\ssy B,h}(W_h^m-W_h^{m-1})+\dtau\,W_h^m
=-\mu\,\dtau\,T_{\ssy B,h}(\partial_x^2W_h^{m-1}),
\quad m=2,\dots,M.
\end{equation*}
Taking the $L^2(D)-$inner product of both sides of the first 
equation above with $W_h^1$ and of the second one with
$W_h^m$, and then applying \eqref{adjo2} and
\eqref{innerproduct}, we obtain
\begin{equation}\label{Dec2015_20}
\|\partial_x^2(T_{\ssy B,h}W_h^1)\|_{\ssy 0,D}^2
-\|\partial_x^2(T_{\ssy B,h}W_h^0)\|_{\ssy 0,D}^2
+2\,\dtau\,\|W_h^1\|_{\ssy 0,D}^2\leq\,0,
\end{equation}
\begin{equation}\label{Dec2015_21}
\begin{split}
\|\partial_x^2(T_{\ssy B,h}W_h^m)\|_{\ssy 0,D}^2
&\,+\|\partial_x^2(T_{\ssy B,h}(W_h^m-W_h^{m-1}))\|_{\ssy 0,D}^2\\
&\,+2\dtau\,\|W_h^m\|_{\ssy 0,D}^2=\|\partial_x^2(T_{\ssy B,h}W_h^{m-1})\|_{\ssy 0,D}^2
+{\mathcal A}_3^m,\quad m=2,\dots,M,\\
\end{split}
\end{equation}
where
\begin{equation*}
{\mathcal A}_3^m:=-2\,\mu\,\dtau\,\left(T_{\ssy B,h}
\left(\partial_x^2W_h^{m-1}\right),W_h^{m}\right)_{\ssy 0,D}.
\end{equation*}
Using \eqref{adjo2},  integration by parts, the Cauchy-Schwarz inequality,
and the arithmetic mean inequality, we have
\begin{equation}\label{Dec2015_11th_7}
\begin{split}
{\mathcal A}_3^m
=&\,-2\,\mu\,\dtau\,\left(W_h^{m-1},\partial_x^2\left(T_{\ssy B,h}W_h^{m}\right)\right)_{\ssy 0,D}\\
=&\,-2\,\mu\,\dtau\,\left(W_h^{m-1},\partial_x^2
\left(T_{\ssy B,h}\left(W_h^m-W_h^{m-1}\right)\right)\right)_{\ssy 0,D}\\
&\quad-2\,\mu\,\dtau\,(W_h^{m-1},
\partial_x^2(T_{\ssy B,h}W_h^{m-1}))_{\ssy 0,D}\\
\leq&\,\dtau^2\,\mu^2\,\|W_h^{m-1}\|^2_{\ssy 0,D}
+\|\partial_x^2\left(T_{\ssy B,h}\left(W_h^m-W_h^{m-1}\right)\right)\|_{\ssy 0,D}^2\\
&\quad+\tfrac{\dtau}{2}\,\|W_h^{m-1}\|^2_{\ssy 0,D}
+2\,\dtau\,\mu^2\,\|\partial_x^2(T_{\ssy B,h}W_h^{m-1})\|_{\ssy 0,D}^2,
\quad m=2,\dots,M.\\
\end{split}
\end{equation}
Combining \eqref{Dec2015_21} and \eqref{Dec2015_11th_7}, we arrive at
\begin{equation}\label{Dec2015_11th_9}
\begin{split}
\|\partial_x^2(T_{\ssy B,h}W_h^m)\|_{\ssy 0,D}^2&\,+2\,\dtau\,\|W^m_h\|_{\ssy 0,D}^2
\leq\|\partial_x^2(T_{\ssy B,h}W_h^{m-1})\|_{\ssy 0,D}^2
+\tfrac{\dtau}{2}\,\|W^{m-1}_h\|_{\ssy 0,D}^2\\
&+2\,\dtau\,\mu^2\,\left(\,\|\partial_x^2(T_{\ssy B,h}W_h^{m-1})\|_{\ssy 0,D}^2
+\dtau\,\|W_h^{m-1}\|_{\ssy 0,D}^2\,\right), 
\quad m=2,\dots,M.\\
\end{split}
\end{equation}
Let$\Upsilon_h^{\ell}:=\|\partial_x^2(T_{\ssy B,h}W_h^{\ell})\|_{\ssy 0,D}^2
+\dtau\,\|W_h^{\ell}\|_{\ssy 0,D}^2$ for $\ell=1,\dots,M$. Then,
we use \eqref{Dec2015_20}, \eqref{IMEXFD_1}, \eqref{TB_bound},
\eqref{minus_equiv} and \eqref{Dec2015_11th_9} to obtain
\begin{equation}\label{Dec2015_17th_a}
\begin{split}
\Upsilon_h^1\leq&\,\|\partial_x^2(T_{\ssy B,h}P_hw_0)\|_{\ssy 0,D}^2\\
\leq&\,\|\partial_x^2(T_{\ssy B,h}w_0)\|_{\ssy 0,D}^2\\
\leq&\,\|w_0\|_{\ssy -2,D}^2\\
\leq&\,\|w_0\|_{\ssy{\bfdot H}^{-2}}^2
\end{split}
\end{equation}
and
\begin{equation}\label{Dec2015_17th_b}
\Upsilon_h^{m}\leq(1+2\,\mu^2\dtau)\,\Upsilon_h^{m-1},
\quad m=2,\dots,M.
\end{equation}
From \eqref{Dec2015_17th_b}, after the application of
a standard discrete Gronwall argument and the use of \eqref{Dec2015_17th_a},
we conclude that
\begin{equation}\label{Dec2015_100}
\begin{split}
\max_{1\leq{m}\leq{\ssy M}}
\Upsilon_h^m\leq&\,C\,\Upsilon_h^1\\
\leq&\,C\,\|w_0\|_{\ssy{\bfdot H}^{-2}}^2.
\end{split}
\end{equation}
Summing both sides of \eqref{Dec2015_11th_9} with respect to $m$,
from $2$ up to $M$, we have
\begin{equation*}
\begin{split}
\dtau\,\sum_{m=2}^{\ssy M}\|W_h^m\|_{\ssy 0,D}^2
\leq&\,\|\partial_x^2(T_{\ssy B,h}W_h^1)\|_{\ssy 0,D}^2
+\tfrac{\dtau}{2}\,\sum_{m=1}^{\ssy M-1}\|W_h^m\|_{\ssy 0,D}^2
+2\,\mu^2\,\dtau\,\sum_{m=1}^{\ssy M-1}\Upsilon_h^m,
\end{split}
\end{equation*}
which, along with \eqref{Dec2015_100}, yields
\begin{equation}\label{Dec2015_101}
\begin{split}
\tfrac{\dtau}{2}\,\sum_{m=1}^{\ssy M}\|W_h^m\|_{\ssy 0,D}^2
\leq&\Upsilon_h^1
+2\,\mu^2\,\dtau\,\sum_{m=1}^{\ssy M-1}\Upsilon_h^m\\
\leq&\,C\,\|w_0\|_{\ssy{\bfdot H}^{-2}}^2.\\
\end{split}
\end{equation}
Thus, \eqref{Dec2015_101} and \eqref{Nov2015_24}
yield \eqref{Nov2015_Propo2} for $\theta=0$.
\par
Thus, the error estimate \eqref{Nov2015_Propo2} follows by interpolation,
since it holds for $\theta=1$ and $\theta=0$.
\end{proof}
%
%
%
%

%
%
\section{Convergence analysis of the IMEX finite element method}\label{Stoch_Section}
In order to estimate the approximation error of the IMEX finite element method
given in Section~\ref{The_Numerical_Method},  we use, as a tool, the
corresponding IMEX time-discrete approximations of $\uu$, which
are defined first by setting
\begin{equation}\label{BackE1}
\UU^0:=0
\end{equation}
and then, for $m=1,\dots,M$, by seeking $\UU^m\in {\bfdot H}^4(D)$ such that
\begin{equation}\label{BackE2}
\UU^m-\UU^{m-1}+\dtau\,\left(\,\partial_x^4\UU^m
+\mu\,\partial_x^2\UU^{m-1}\right)
=\int_{\ssy\Delta_m}\partial_x{\mathcal W}\,d\tau\quad\text{\rm a.s.}.
\end{equation}
Thus, we split the total error of the IMEX finite element method as follows
\begin{equation}\label{split_split}
\max_{0\leq{m}\leq{\ssy M}}\left({\mathbb E}\left[\,
\|\uu^m-\UU_h^m\|_{\ssy 0,D}^2\,\right]\right)^{\frac{1}{2}}
\leq\max_{0\leq{m}\leq{\ssy M}}{\mathcal E}_{\ssy\rm TDR}^m
+\max_{0\leq{m}\leq{\ssy M}}{\mathcal E}^m_{\ssy\rm SDR},
\end{equation}
where $\uu^m:=\uu(\tau_m,\cdot)$,
${\mathcal E}^m_{\ssy\rm TDR}:=
\left({\mathbb E}\left[\|\uu^m-\UU^m\|_{\ssy 0,D}^2 \right]\right)^{\ssy 1/2}$ is the {\sl time discretization error} 
at $\tau_m$, and
${\mathcal E}_{\ssy\rm SDR}^m:=
\left({\mathbb E}\left[\|\UU^m-\UU^m_h\|_{\ssy 0,D}^2 \right]\right)^{\ssy 1/2}$ is the {\sl space discretization error} 
at $\tau_m$.
\subsection{Estimating the time discretization error}
\par
The convergence estimate of Proposition~\ref{DetPropo1} is the main tool in providing
a discrete in time $L^{\infty}_t(L^2_{\ssy P}(L^2_x))$ error estimate
of the time-discretization error (cf. \cite{YubinY05}, \cite{KZ2010}, \cite{KZ2013b}).
%
%
%
%
\begin{proposition}\label{TimeDiscreteErr1}
Let $\uu$ be the solution to \eqref{AC2} and
$(\UU^m)_{m=0}^{\ssy M}$ be the  time-discrete approximations of $\uu$
defined by \eqref{BackE1}--\eqref{BackE2}.
Then, there exists a constant  ${\widehat c}_{\ssy\rm TDR}$,
independent of $\Delta{t}$, ${\sf M}$ and $\dtau$, such that
\begin{equation}\label{ElPasso}
\max_{0\leq m \leq {\ssy M}}{\mathcal E}_{\ssy{\rm TDR}}^{m}
\leq\,{\widehat c}_{\ssy\rm TDR}\,\epsilon^{-\frac{1}{2}}
\,\dtau^{\frac{1}{8}-\epsilon}
\quad\forall\,\epsilon\in\left(0,\tfrac{1}{8}\right].
\end{equation}
%
%
\end{proposition}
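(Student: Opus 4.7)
The plan is to decompose the time-discretization error mode-by-mode in the basis $(\varepsilon_i)$, reduce each mode's contribution via the It\^o isometry to a deterministic time-discrete error of the type handled in Proposition~\ref{DetPropo1}, and then sum over the spectrum with the choice $\theta=\tfrac18-\epsilon$.

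\emph{Step 1 (discrete Duhamel and spectral form).} By linearity of \eqref{BackE1}--\eqref{BackE2}, I would write $\UU^m=\sum_{\ell=1}^m S^{m-\ell+1}_{\dtau}\bigl(\int_{\Delta_\ell}\partial_x{\mathcal W}\,d\tau\bigr)$, where $S^n_{\dtau}$ is the deterministic modified-IMEX solution operator $w_0\mapsto W^n$ of Section~\ref{Det_TD}. Since $\partial_x\varphi_i=-\lambda_i\varepsilon_i$, the operator $S^n_{\dtau}$ acts diagonally as $S^n_{\dtau}\varepsilon_i=\gamma^n_i\varepsilon_i$ with $\gamma^n_i:=(1+\dtau\lambda_i^4)^{-1}\bigl[(1+\dtau\mu\lambda_i^2)/(1+\dtau\lambda_i^4)\bigr]^{n-1}$, and the continuous semigroup acts as ${\mathcal S}(t)\varepsilon_i=e^{-\lambda_i^2(\lambda_i^2-\mu)t}\varepsilon_i$. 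Combining this with \eqref{HatUform} and the definition \eqref{SLoad}--\eqref{RDef} of ${\mathcal W}$ yields
\begin{equation*}
\uu(\tau_m,\cdot)-\UU^m=\sum_{i=1}^{\ssy{\sf M}}\lambda_i\,\varepsilon_i\sum_{n=1}^{\ssy{\sf N}}\bigl(\delta^m_{i,n}-\beta^m_{i,n}\bigr)R^n_i,
\end{equation*}
with $\beta^m_{i,n}:=\tfrac{1}{\Delta t}\int_{T_n\cap(0,\tau_m)}e^{-\lambda_i^2(\lambda_i^2-\mu)(\tau_m-s)}\,ds$ and $\delta^m_{i,n}:=\tfrac{1}{\Delta t}\sum_{\ell=1}^m\gamma^{m-\ell+1}_i\,|T_n\cap\Delta_\ell|$. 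Modes $i>{\sf M}$ contribute nothing because ${\mathcal W}$ is truncated in the $\varphi_i$-basis.

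\emph{Step 2 (It\^o isometry and per-mode reduction).} Independence of $(R^n_i)$ across both $n,i$ with variance $\Delta t$, together with the $(\cdot,\cdot)_{\ssy 0,D}$-orthonormality of $(\varepsilon_i)$, gives
\begin{equation*}
\bigl({\mathcal E}^{m}_{\ssy\rm TDR}\bigr)^2=\sum_{i=1}^{\ssy{\sf M}}\lambda_i^2\,\Delta t\sum_{n=1}^{\ssy{\sf N}}\bigl(\delta^m_{i,n}-\beta^m_{i,n}\bigr)^2.
\end{equation*}
Writing $\delta^m_{i,n}-\beta^m_{i,n}=\tfrac{1}{\Delta t}\int_{T_n}g_i(s)\,ds$ for the obvious integrand $g_i$, applying Cauchy--Schwarz on each $T_n$, and changing variables $r=\tau_m-s$, I would bound the inner double sum by $\int_0^{\tau_m}\bigl|\widetilde W_i(r)-e^{-\lambda_i^2(\lambda_i^2-\mu)r}\bigr|^2\,dr$, where $\widetilde W_i(r):=\gamma^j_i$ for $r\in(\tau_{j-1},\tau_j)$. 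Applying Proposition~\ref{DetPropo1} with $w_0=\varepsilon_i$ (using $\|\varepsilon_i\|_{\ssy{\bfdot H}^{4\theta-2}}=\lambda_i^{4\theta-2}$) yields $\dtau\sum_j|\gamma^j_i-e^{-\lambda_i^2(\lambda_i^2-\mu)\tau_j}|^2\leq C\,\dtau^{2\theta}\lambda_i^{8\theta-4}$, and the remaining time-sampling remainder $\sum_j\int_{\tau_{j-1}}^{\tau_j}|e^{-\lambda_i^2(\lambda_i^2-\mu)\tau_j}-e^{-\lambda_i^2(\lambda_i^2-\mu)r}|^2\,dr$ is of the same order after interpolating the pointwise bound $|e^{-\nu\tau_j}-e^{-\nu r}|\leq e^{-\nu r}\min(\nu(\tau_j-r),\,2)$ with $\nu=\lambda_i^2(\lambda_i^2-\mu)$ and integrating over $(0,T)$. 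These two ingredients combine to give a per-mode bound of order $\dtau^{2\theta}\lambda_i^{8\theta-4}$.

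\emph{Step 3 (spectral sum and optimization).} Including the weight $\lambda_i^2$ and summing,
\begin{equation*}
\bigl({\mathcal E}^{m}_{\ssy\rm TDR}\bigr)^2\leq\,C\,\dtau^{2\theta}\,\sum_{i=1}^{\ssy{\sf M}}\lambda_i^{8\theta-2}.
\end{equation*}
Uniform convergence in ${\sf M}$ requires $\theta<\tfrac18$; choosing $\theta=\tfrac18-\epsilon$ produces the exponent $-1-8\epsilon$, and $\sum_{i\ge 1}\lambda_i^{-1-8\epsilon}\leq C\,\epsilon^{-1}$, so that taking a square root yields \eqref{ElPasso}. The main technical obstacle is step~2: because the noise partition $(T_n)$ and the time-step partition $(\Delta_m)$ are in general unrelated, the coefficient $\delta^m_{i,n}$ is a genuine weighted average of $\gamma^j_i$ across the pieces $\Delta_\ell\cap T_n$ rather than a single value, and isolating the part controlled by Proposition~\ref{DetPropo1} from the residual time-sampling error --- while keeping precise $\lambda_i$-weights so that the subsequent spectral sum still converges to give the $\epsilon^{-1/2}$ factor --- is where the detailed bookkeeping must be done. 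Once the per-mode estimate is rigorously established, the final spectral summation and choice of $\theta$ are purely mechanical.
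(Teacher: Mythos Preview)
Your proposal is correct and follows essentially the same route as the paper: discrete Duhamel representation of $\UU^m$, independence of the $R^n_i$ to compute the mean square, Cauchy--Schwarz on each $T_n$ to pass to the integral $\sum_{\ell}\int_{\Delta_\ell}\|{\mathcal S}_{\dtau}^{m-\ell+1}(\varepsilon_i)-{\mathcal S}(\tau_m-\tau)(\varepsilon_i)\|_{\ssy 0,D}^2\,d\tau$, a split into the ``discrete vs.\ continuous at the nodes'' piece (bounded via Proposition~\ref{DetPropo1}) and the ``time-sampling'' piece, and finally the choice $\theta=\tfrac18-\epsilon$ in the spectral sum. The only cosmetic difference is that you work with the explicit scalar symbols $\gamma_i^n$ while the paper stays at the operator level; and what you call the ``main technical obstacle'' (the mismatch between the $(T_n)$ and $(\Delta_\ell)$ partitions) is in fact dispatched in one line by precisely the Cauchy--Schwarz step you propose, after which the partition $(T_n)$ disappears entirely --- so no additional bookkeeping is needed there.
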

%
%
%
%
%
%
%
%
%
%
\begin{proof}
In the sequel, we will use the symbol $C$ to denote a generic constant that is independent
of $\Delta{t}$, ${\sf M}$ and $\dtau$, and may changes value from one line to the other.
\par
First, we introduce some notation by letting ${\sf I}:L^2(D)\to L^2(D)$ be the identity operator,
${\sf Y}:H^2(D)\rightarrow L^2(D)$ be the differential operator
${\sf Y}:={\sf I}-\dtau\,\mu\,\partial_x^2$,
and
${\sf\Lambda}:L^2(D)\to{\bfdot H}^4(D)$ be the inverse
elliptic operator ${\sf\Lambda}:=({\sf I}+\dtau\,\partial_x^4)^{-1}$.
%
%
Then, for $m=1,\dots,M$, we define the operator
${\sf Q}^m:L^2(D)\to{\bfdot H}^4(D)$ by
${\sf Q}^m:=({\sf\Lambda}\circ{\sf Y})^{m-1}\circ{\sf\Lambda}$.
Also, for given $w_0\in{\bfdot H}^2(D)$, let
$({\mathcal S}_{\ssy{\Delta\tau}}^m(w_0))_{m=0}^{\ssy M}$
be time-discrete approximations of the solution to the deterministic
problem \eqref{Det_Parab}, defined by \eqref{BEDet1}--\eqref{BEDet2}. 
Then, using a simple induction argument, we conclude that
\begin{equation}\label{EMP_2010_1}
{\mathcal S}_{\ssy{\Delta\tau}}^m(w_0)={\sf Q}^{m}(w_0),\quad m=1,\dots,M.
\end{equation}
\par
Let $m\in\{1,\dots,M\}$. Applying a simple induction argument on \eqref{BackE2} we
conclude that
\begin{equation*}
\UU^m=\sum_{\ell=1}^{\ssy m} \int_{\ssy\Delta_\ell}
{\sf Q}^{m-\ell+1}\left(\partial_x{\mathcal W}(\tau,\cdot)\right)\,d\tau,
\end{equation*}
which, along with \eqref{SLoad} and \eqref{EMP_2010_1}, yields
\begin{equation}\label{NewLight1}
\begin{split}
%
%
\UU^m=&\,-\tfrac{1}{\Delta{t}}\,\sum_{i=1}^{\ssy{\sf M}}\sum_{n=1}^{\ssy{\sf N}} 
R_i^n\,\lambda_i\,\left(\,\sum_{\ell=1}^{\ssy m}\int_{\ssy\Delta_\ell}
{\mathcal X}_{\ssy T_n}(\tau)
\,{\mathcal S}_{\ssy\Delta\tau}^{m-\ell+1}(\varepsilon_i)\,d\tau\,\right)\\
=&\,-\tfrac{1}{\Delta{t}}\,\sum_{i=1}^{\ssy{\sf M}}\sum_{n=1}^{\ssy{\sf N}} 
R_i^n\,\lambda_i\,\left[\,\int_0^{\ssy T}
{\mathcal X}_{\ssy T_n}(\tau)\,\left(\,\sum_{\ell=1}^{\ssy m}
{\mathcal X}_{\ssy\Delta_{\ell}}(\tau)
\,{\mathcal S}_{\ssy\Delta\tau}^{m-\ell+1}(\varepsilon_i)\,\right)\,d\tau\,\right]\\
=&\,-\tfrac{1}{\Delta{t}}\,\sum_{i=1}^{\ssy{\sf M}}\sum_{n=1}^{\ssy{\sf N}} 
R_i^n\,\lambda_i\,\left[\,\int_{\ssy T_n}
\left(\,\sum_{\ell=1}^{\ssy m}
{\mathcal X}_{\ssy\Delta_{\ell}}(\tau)
\,{\mathcal S}_{\ssy\Delta\tau}^{m-\ell+1}(\varepsilon_i)\,\right)\,d\tau\,\right].\\
\end{split}
\end{equation}
Also, using \eqref{HatUform} and \eqref{SLoad}, and proceeding in similar manner,
we arrive at
\begin{equation}\label{NewLight2}
\begin{split}
\uu^m=&\,\int_0^{\tau_m}{\mathcal S}(\tau_m-\tau)
\,\left(\partial_x{\mathcal W}(\tau,\cdot)\right)\,d\tau\\
=&\,-\tfrac{1}{\Delta{t}}\,\sum_{i=1}^{\ssy{\sf M}}\,\sum_{n=1}^{\ssy{\sf N}}
R_i^n\,\lambda_i
\,\left[\,\int_{\ssy T_n}\left(\,\sum_{\ell=1}^m
{\mathcal X}_{\ssy{\Delta_{\ell}}}(\tau)
\,{\mathcal S}(\tau_m-\tau)
\left(\varepsilon_i\right)\,\right)\,d\tau\,\right].\\
\end{split}
\end{equation}
Thus, using \eqref{NewLight1} and \eqref{NewLight2}
along with Remark~\ref{WRemark}, we obtain
\begin{equation*}
\begin{split}
\left(\,{\mathcal E}^m_{\ssy\rm TDR}\,\right)^2=&\,\tfrac{1}{\Delta{t}}
\sum_{i=1}^{\ssy{\sf M}}\,\sum_{n=1}^{\ssy{\sf N}}
\lambda_i^2\,\int_{\ssy D}\left(\,\int_{\ssy T_n}
\left(\,\sum_{\ell=1}^m
{\mathcal X}_{\ssy{\Delta_{\ell}}}(\tau)
\,\,\left[{\mathcal S}_{\ssy\Delta\tau}^{m-\ell+1}(\varepsilon_i)
-{\mathcal S}(\tau_m-\tau)(\varepsilon_i)\right]\,\right)\,d\tau\,\right)^2\,dx\\
\leq&\,\sum_{i=1}^{\ssy{\sf M}}\,\sum_{n=1}^{\ssy{\sf N}}
\lambda_i^2\,\int_{\ssy D}\int_{\ssy T_n}
\left(\,\sum_{\ell=1}^m{\mathcal X}_{\ssy{\Delta_{\ell}}}(\tau)
\,\left[{\mathcal S}_{\ssy\Delta\tau}^{m-\ell+1}(\varepsilon_i)
-{\mathcal S}(\tau_m-\tau)(\varepsilon_i)\right]
\,\right)^2\,d\tau\,dx\\
\leq&\,\sum_{i=1}^{\ssy{\sf M}}\,
\lambda_i^2\,\int_0^{\ssy T}\int_{\ssy D}
\left(\,\sum_{\ell=1}^m{\mathcal X}_{\ssy{\Delta_{\ell}}}(\tau)
\,\left[{\mathcal S}_{\ssy\Delta\tau}^{m-\ell+1}(\varepsilon_i)
-{\mathcal S}(\tau_m-\tau)(\varepsilon_i)\right]
\,\right)^2\,dx\,d\tau\\
\leq&\,\sum_{i=1}^{\ssy{\sf M}}\,
\lambda_i^2\,\left(\,\sum_{\ell=1}^m\int_{\ssy\Delta_{\ell}}
\|{\mathcal S}_{\ssy\Delta\tau}^{m-\ell+1}(\varepsilon_i)
-{\mathcal S}(\tau_m-\tau)(\varepsilon_i)\|_{\ssy 0,D}^2\,d\tau\,\right),\\
\end{split}
\end{equation*}
which, easily, yields 
\begin{equation}\label{mainerrorF}
{\mathcal E}^m_{\ssy\rm TDR}\leq\,\sqrt{{\mathcal B}_1^m}+\sqrt{{\mathcal
B}_2^m},
\end{equation}
with
\begin{equation*}
\begin{split}
{\mathcal B}_1^m:=&\,\sum_{i=1}^{\ssy{\sf M}}\,
\lambda_i^2\,\left(\,\sum_{\ell=1}^m\dtau\,
\left\|{\mathcal S}_{\ssy\Delta\tau}^{m-\ell+1}(\varepsilon_i)
-{\mathcal S}(\tau_{m-\ell+1})(\varepsilon_i)\right\|_{\ssy 0,D}^2\,\right),\\
{\mathcal B}_2^m:=&\,\sum_{i=1}^{\ssy{\sf M}}\,
\lambda_i^2\,\left(\,\sum_{\ell=1}^m\int_{\ssy\Delta_{\ell}}
\left\|{\mathcal S}(\tau_{m-\ell+1})(\varepsilon_i)
-{\mathcal S}(\tau_m-\tau)(\varepsilon_i)\right\|_{\ssy 0,D}^2\,d\tau\,\right).\\
\end{split}
\end{equation*}
Proceeding as in the proof of Theorem~4.1 in \cite{KZ2013b} we get 
\begin{equation}\label{Ydaspis952}
\sqrt{{\mathcal B}_2^m}\leq\,C\,\dtau^{\frac{1}{8}}.
\end{equation}
Also, using the error estimate \eqref{Nov2015_Propo1} it follows that
\begin{equation*}\label{refo_2}
\begin{split}
\sqrt{{\mathcal B}_1^m}\leq&\,C\,\dtau^{\theta}
\,\left(\,\sum_{i=1}^{\ssy{\sf M}}\lambda_i^2\,\|\varepsilon_i\|^2_{\ssy
{\bfdot H}^{4\theta-2}}\,\right)^{\frac{1}{2}}\\
\leq&\,C\,\dtau^{\theta}\,\left(\,\sum_{i=1}^{\ssy{\sf M}}
\tfrac{1}{\lambda_i^{2-8\theta}}\,\right)^{\frac{1}{2}}\quad\forall\,\theta\in[0,1].\\
\end{split}
\end{equation*}
Setting $\theta=\tfrac{1}{8}-\epsilon$ with $\epsilon\in\left(0,\tfrac{1}{8}\right]$, we have
\begin{equation}\label{trabajito}
\begin{split}
\sqrt{{\mathcal B}_1^m}\leq&\,C\,\dtau^{\frac{1}{8}-\epsilon}
\,\left(\,\sum_{i=1}^{\ssy{\sf M}} \tfrac{1}{i^{1+8\epsilon}}\, \right)^{\half}\\
\leq&\,C\,\dtau^{\frac{1}{8}-\epsilon}\,\left(\,1+\int_1^{\ssy{\sf M}}x^{-1-8\epsilon}\,dx\,\right)^{\half}\\
\leq&\,C\,\dtau^{\frac{1}{8}-\epsilon}\,\epsilon^{-\half}
\,\left(1-\tfrac{1}{{\sf M}^{8\epsilon}}\right)^{\half}.\\
\end{split}
\end{equation}
\par
Thus, the estimate \eqref{ElPasso} follows, easily, as a simple consequence of \eqref{mainerrorF}, \eqref{Ydaspis952}
and \eqref{trabajito}.
\end{proof}
%
%
%
%
\subsection{Estimating the space discretization error}
\par
The outcome of Proposition~\ref{DetPropo2} will be used
below in the derivation of a discrete in time $L^{\infty}_t(L^2_{\ssy P}(L^2_x))$
error estimate of the space discretization error (cf. \cite{YubinY05},
\cite{KZ2010}, \cite{KZ2013b}).
%
%
%
\begin{proposition}\label{Tigrakis}
Let $r=2$ or $3$, $(\UU_h^m)_{m=0}^{\ssy M}$ be the fully dicsrete approximations
defined by  \eqref{FullDE1}--\eqref{FullDE2} and $(\UU^m)_{m=0}^{\ssy M}$
be the time discrete approximations defined by \eqref{BackE1}--\eqref{BackE2}.
Then, there exists a constant ${\widehat c}_{\ssy\rm SDR}>0$,
independent of ${\sf M}$, $\Delta{t}$, $\dtau$ and $h$,
such that
\begin{equation}\label{Lasso1}
\max_{0\leq{m}\leq {\ssy M}}{\mathcal E}^m_{\ssy\rm SDR}
\leq\,{\widehat c}_{\ssy\rm SDR}
\,\epsilon^{-\frac{1}{2}} \,\,\,h^{\frac{r}{6}-\epsilon}
\quad\forall\,\epsilon\in\left(0,\tfrac{r}{6}\right].
\end{equation}
%
%
%
\end{proposition}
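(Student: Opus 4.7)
The plan is to mirror the structure of the proof of Proposition~\ref{TimeDiscreteErr1}, with the fully-discrete deterministic estimate of Proposition~\ref{DetPropo2} playing the role that Proposition~\ref{DetPropo1} played there. First I introduce the fully-discrete operators ${\sf\Lambda}_h:=(I+\dtau\,B_h)^{-1}$ and ${\sf Y}_h:=I-\dtau\,\mu\,P_h\partial_x^2$, and check that the deterministic scheme \eqref{IMEXFD_1}--\eqref{IMEXFD_3} admits the compact form ${\mathcal S}^k_{\ssy\dtau,h}(w_0):=W^k_h=({\sf\Lambda}_h{\sf Y}_h)^{k-1}{\sf\Lambda}_h P_h w_0$. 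Testing \eqref{FullDE2} against $\chi\in{\sf M}_h^r$ and rewriting in operator form (which introduces a $P_h$ on the lower-order and forcing terms) gives $\UU_h^m={\sf\Lambda}_h{\sf Y}_h\UU_h^{m-1}+{\sf\Lambda}_h P_h\int_{\ssy\Delta_m}\partial_x{\mathcal W}\,d\tau$; unrolling from $\UU_h^0=0$ and using \eqref{SLoad} together with $\varphi_i'=-\lambda_i\varepsilon_i$, I arrive at
\begin{equation*}
\UU_h^m=-\tfrac{1}{\Delta{t}}\sum_{i=1}^{\ssy{\sf M}}\sum_{n=1}^{\ssy{\sf N}}R_i^n\,\lambda_i\int_{\ssy T_n}\sum_{\ell=1}^m{\mathcal X}_{\ssy\Delta_\ell}(\tau)\,{\mathcal S}^{m-\ell+1}_{\ssy\dtau,h}(\varepsilon_i)\,d\tau,
\end{equation*}
which differs from the representation \eqref{NewLight1} only by replacing ${\mathcal S}^{m-\ell+1}_{\ssy\dtau}$ with ${\mathcal S}^{m-\ell+1}_{\ssy\dtau,h}$.

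Subtracting the two representations, squaring in $L^2(D)$, and using the independence and ${\mathcal N}(0,\Delta{t})$ distribution of the $R_i^n$ (see \eqref{WRemark}) I get a clean Bessel-type identity for the expected squared error. I then apply the Cauchy--Schwarz inequality to the $\tau$-integral on each $T_n$, exploit that the $({\mathcal X}_{\ssy\Delta_\ell})$ are pairwise disjoint indicators (so $(\sum_\ell{\mathcal X}_{\ssy\Delta_\ell}\,f_\ell)^2=\sum_\ell{\mathcal X}_{\ssy\Delta_\ell}\,f_\ell^2$), and let $\sum_{n=1}^{\ssy{\sf N}}\int_{\ssy T_n}$ collapse to $\int_0^T$, exactly as in the derivation of the bound for ${\mathcal B}_1^m$ in the proof of Proposition~\ref{TimeDiscreteErr1}. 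The net result is
\begin{equation*}
({\mathcal E}^m_{\ssy\rm SDR})^2\leq\sum_{i=1}^{\ssy{\sf M}}\lambda_i^2\,\dtau\sum_{k=1}^M\|{\mathcal S}^k_{\ssy\dtau}(\varepsilon_i)-{\mathcal S}^k_{\ssy\dtau,h}(\varepsilon_i)\|_{\ssy 0,D}^2.
\end{equation*}

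To close, I invoke Proposition~\ref{DetPropo2} with $w_0=\varepsilon_i$ and free parameter $\theta\in(0,1/6)$; using $\|\varepsilon_i\|_{\ssy{\bfdot H}^s}=\lambda_i^s$ (which follows from $(\varepsilon_k)_{k\geq1}$ being the $(\cdot,\cdot)_{\ssy 0,D}$-orthonormal eigenbasis of $-\partial_x^2$), the inner sum is bounded by $C\,h^{2r\theta}\lambda_i^{6\theta-4}$, hence
\begin{equation*}
({\mathcal E}^m_{\ssy\rm SDR})^2\leq C\,h^{2r\theta}\sum_{i=1}^{\ssy{\sf M}}\lambda_i^{6\theta-2}.
\end{equation*}
The series converges independently of ${\sf M}$ only when $6\theta-2<-1$, i.e. $\theta<1/6$, which is precisely where the exponent $r/6$ in \eqref{Lasso1} comes from; the substitution $\theta=1/6-\epsilon/r$ with $\epsilon\in(0,r/6]$ together with the integral test $\sum_{i=1}^{\ssy{\sf M}} i^{-1-6\epsilon/r}\leq 1+\tfrac{r}{6\epsilon}$ then yields \eqref{Lasso1}. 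The one technical point that requires care is verifying the discrete Duhamel representation for the fully-discrete scheme---specifically that the $P_h$ already built into the definition of ${\mathcal S}^k_{\ssy\dtau,h}$ via $W_h^0=P_hw_0$ makes the operator-level identification of $\UU_h^m$ with a sum of deterministic evolutions of $\varepsilon_i$ rigorous, so that Proposition~\ref{DetPropo2} can be applied term by term.
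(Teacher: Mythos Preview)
Your proposal is correct and follows essentially the same route as the paper's proof: you derive the discrete Duhamel representation \eqref{NewLight111} for $\UU_h^m$, subtract it from \eqref{NewLight1}, use the independence in \eqref{WRemark} together with Cauchy--Schwarz on each $T_n$ to reduce the mean-square error to $\sum_i\lambda_i^2\,\dtau\sum_k\|{\mathcal S}^k_{\ssy\dtau}(\varepsilon_i)-{\mathcal S}^k_{\ssy\dtau,h}(\varepsilon_i)\|_{\ssy 0,D}^2$, and then invoke Proposition~\ref{DetPropo2}. Your substitution $\theta=\tfrac{1}{6}-\tfrac{\epsilon}{r}$ is exactly the paper's $\theta=\tfrac{1}{6}-\delta$ followed by $\epsilon=r\delta$, and the integral-test bound on $\sum_i i^{-1-6\epsilon/r}$ matches the paper's estimate.
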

%
%
%
%
%
%
%
%
%
\begin{proof}
In the sequel, we will use the symbol $C$ to denote a generic constant that is independent
of $\Delta{t}$, ${\sf M}$, $\dtau$ and $h$, and may changes value from one line to the other.
\par
Let us denote by ${\sf I}:L^2(D)\to L^2(D)$ the identity operator,
by ${\sf Y}_h:{\sf M}_h^r\rightarrow{\sf M}_h^r$ the discrete differential
operator ${\sf Y}_h:={\sf I}-\mu\,\dtau\,(P_h\circ\partial_x^2)$,
${\sf\Lambda}_h:L^2(D)\to {\sf M}^r_h$ be the inverse discrete elliptic
operator  ${\sf\Lambda}_h:=(I+\Delta\tau\,B_h)^{-1}\circ P_h$.
Then,  for $m=1,\dots,M$, we define the auxiliary operator ${\sf Q}_h^m:L^2(D)\to{\sf M}_h^r$
by ${\sf Q}^m_h:=({\sf\Lambda}_h\circ{\sf Y}_h)^{m-1}\circ{\sf\Lambda}_h$.
Also, for given $w_0\in{\bfdot H}^2(D)$,
let $({\mathcal S}_{h}^m(w_0))_{m=0}^{\ssy M}$
be fully discrete discrete approximations of the solution to the deterministic
problem \eqref{Det_Parab}, defined by \eqref{IMEXFD_1}--\eqref{IMEXFD_3}. 
Then, using a simple induction argument, we conclude that
\begin{equation}\label{EMP_2010_111}
{\mathcal S}_{h}^m(w_0)={\sf Q}_h^{m}(w_0),\quad m=1,\dots,M.
\end{equation}
\par
Let $m\in\{1,\dots,M\}$. Using a simple induction argument on \eqref{FullDE2},
\eqref{SLoad} and \eqref{EMP_2010_111}, we conclude that
\begin{equation}\label{NewLight111}
\begin{split}
\UU_h^m=&\,\sum_{\ell=1}^{\ssy m} \int_{\ssy\Delta_\ell}
{\sf Q}_h^{m-\ell+1}\left(\partial_x{\mathcal W}(\tau,\cdot)\right)\,d\tau\\
=&\,-\tfrac{1}{\Delta{t}}\,\sum_{i=1}^{\ssy{\sf M}}\sum_{n=1}^{\ssy{\sf N}} 
R_i^n\,\lambda_i\,\left[\,\int_{\ssy T_n}
\left(\,\sum_{\ell=1}^{\ssy m}
{\mathcal X}_{\ssy\Delta_{\ell}}(\tau)
\,{\mathcal S}_{h}^{m-\ell+1}(\varepsilon_i)\,\right)\,d\tau\,\right].\\
\end{split}
\end{equation}
After, using \eqref{NewLight111}, \eqref{NewLight1} and Remark~\ref{WRemark},
and proceeding as in the proof of Proposition~\ref{TimeDiscreteErr1}, we arrive at
\begin{equation*}
{\mathcal E}^m_{\ssy\rm SDR}\leq\left[\,\sum_{i=1}^{\ssy{\sf M}}\,
\lambda_i^2\,\left(\,\sum_{\ell=1}^m\dtau\,
\|{\mathcal S}_{\ssy\Delta\tau}^{m-\ell+1}(\varepsilon_i)
-{\mathcal S}^{m-\ell+1}_{h}(\varepsilon_i)\|_{\ssy 0,D}^2\,d\tau\,\right)\,\right]^{\half},
\end{equation*}
which, along \eqref{Nov2015_Propo2}, yields
\begin{equation}\label{refo_1}
\begin{split}
%
%
{\mathcal E}^m_{\ssy\rm SDR}\leq&\,C\,h^{r\theta}
\,\left(\,\sum_{i=1}^{\ssy{\sf M}}\lambda_i^2\,\|\varepsilon_i\|^2_{\ssy
{\bfdot H}^{3\theta-2}}\,\right)^{\half}\\
%
%
\leq&\,C\,h^{r\theta}\,\left(\,
\,\sum_{i=1}^{\ssy{\sf M}}\tfrac{1}{\lambda_i^{2-6\theta}}\,\right)^{\half}
\quad\forall\,\theta\in[0,1].\\
\end{split}
\end{equation}
Setting $\theta=\tfrac{1}{6}-\delta$ with $\delta\in\left(0,\tfrac{1}{6}\right]$, we have
\begin{equation*}
\begin{split}
{\mathcal E}^m_{\ssy\rm SDR}\leq&\,C\,h^{\frac{r}{6}-r\delta}
\,\left(\,\sum_{i=1}^{\ssy{\sf M}} \tfrac{1}{i^{1+6\delta}}\, \right)^{\half}\\
\leq&\,C\,h^{\frac{r}{6}-r\delta}\,\left(\,1+\int_1^{\ssy{\sf M}}x^{-1-6\delta}\,dx\,\right)^{\half}\\
\leq&\,C\,h^{\frac{r}{6}-r\delta}\,\delta^{-\half}
\,\left(\,1-{\sf M}^{-6\delta}\,\right)^{\half},\\
\end{split}
\end{equation*}
which obviously yields \eqref{Lasso1} with $\epsilon=r\delta$.
\end{proof}
\subsection{Estimating the total error}
%
%
%
%
%
\begin{theorem}\label{FFQEWR}
Let $r=2$ or $3$, $\uu$ be the solution to the problem \eqref{AC2}, and
$(\UU_h^m)_{m=0}^{\ssy M}$ be the finite element approximations of $\uu$
constructed by \eqref{FullDE1}--\eqref{FullDE2}.
Then, there exists a constant ${\widehat c}_{\ssy{\rm TTL}}>0$,
independent of $h$, $\Delta\tau$, $\Delta{t}$ and ${\sf M}$, such that
\begin{equation}\label{all_all_all}
\max_{0\leq{m}\leq{\ssy M}}\left(\,{\mathbb E}\left[
\,\|\UU_h^m-\uu^m\|_{\ssy 0,D}^2\right]\,\right)^{\half}
\leq \,{\widehat c}_{\ssy{\rm TTL}}\,\left(\,\epsilon^{-\frac{1}{2}}_1\,\dtau^{\frac{1}{8}-\epsilon_1}
+\epsilon_2^{-\frac{1}{2}}\,\,\,h^{\frac{r}{6}-\epsilon_2}\,\right)
\end{equation}
for all $\epsilon_1\in\left(0,\tfrac{1}{8}\right]$ and
$\epsilon_2\in\left(0,\frac{r}{6}\right]$.
%
%
\end{theorem}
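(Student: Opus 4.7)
The plan is to combine the triangle-inequality splitting already recorded in \eqref{split_split} with the two error bounds proved in Proposition~\ref{TimeDiscreteErr1} and Proposition~\ref{Tigrakis}. More precisely, inserting the IMEX time-discrete approximation $\UU^m$ defined in \eqref{BackE1}--\eqref{BackE2} as an intermediate between $\uu^m$ and $\UU_h^m$, and applying the Minkowski inequality in $L^2(\Omega;L^2(D))$, yields
\begin{equation*}
\left(\,{\mathbb E}\left[\,\|\UU_h^m-\uu^m\|_{\ssy 0,D}^2\,\right]\,\right)^{\half}
\leq {\mathcal E}_{\ssy\rm TDR}^m+{\mathcal E}_{\ssy\rm SDR}^m,
\end{equation*}
and taking the maximum over $m\in\{0,\dots,M\}$ gives exactly \eqref{split_split}. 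Nothing further is needed to justify this preliminary step, since both $\uu^0-\UU^0=0$ and $\UU^0-\UU_h^0=0$ make the bound trivial at $m=0$.

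Next, I would invoke Proposition~\ref{TimeDiscreteErr1} to control the first maximum by
\begin{equation*}
\max_{0\leq m\leq{\ssy M}}{\mathcal E}_{\ssy\rm TDR}^m
\leq\,{\widehat c}_{\ssy\rm TDR}\,\epsilon_1^{-\half}\,\dtau^{\frac{1}{8}-\epsilon_1}
\qquad\forall\,\epsilon_1\in\left(0,\tfrac{1}{8}\right],
\end{equation*}
and Proposition~\ref{Tigrakis} to control the second maximum by
\begin{equation*}
\max_{0\leq m\leq{\ssy M}}{\mathcal E}_{\ssy\rm SDR}^m
\leq\,{\widehat c}_{\ssy\rm SDR}\,\epsilon_2^{-\half}\,h^{\frac{r}{6}-\epsilon_2}
\qquad\forall\,\epsilon_2\in\left(0,\tfrac{r}{6}\right].
\end{equation*}
The constants ${\widehat c}_{\ssy\rm TDR}$ and ${\widehat c}_{\ssy\rm SDR}$ are independent of $\Delta{t}$, $\sf M$, $\dtau$ and $h$, so their maximum (or sum) is a legitimate choice of ${\widehat c}_{\ssy\rm TTL}$.

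Adding the two estimates and setting ${\widehat c}_{\ssy\rm TTL}:=\max\{{\widehat c}_{\ssy\rm TDR},{\widehat c}_{\ssy\rm SDR}\}$ immediately delivers \eqref{all_all_all}. There is essentially no obstacle here: the entire technical weight of the result has already been absorbed into Sections~\ref{Det_Section} and into the two preceding propositions of Section~\ref{Stoch_Section}, whose stochastic-to-deterministic reductions (via the spectral representations \eqref{NewLight1}, \eqref{NewLight2}, \eqref{NewLight111} and the independence properties of the $R_i^n$ from \eqref{WRemark}) convert the mean-square errors into weighted sums of the deterministic time- and fully-discrete errors estimated in Propositions~\ref{DetPropo1} and \ref{DetPropo2}. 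Thus Theorem~\ref{FFQEWR} is a direct corollary of the splitting \eqref{split_split} combined with \eqref{ElPasso} and \eqref{Lasso1}, and the proof reduces to a one-line application of the triangle inequality.
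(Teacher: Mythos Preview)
Your proposal is correct and follows exactly the paper's own argument: the paper's proof of Theorem~\ref{FFQEWR} is the single sentence that \eqref{all_all_all} follows from \eqref{ElPasso}, \eqref{Lasso1} and \eqref{split_split}. Your elaboration (Minkowski inequality, triviality at $m=0$, choice of ${\widehat c}_{\ssy\rm TTL}$) simply makes explicit what the paper leaves implicit.
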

\begin{proof}
The error bound \eqref{all_all_all} follows easily from \eqref{ElPasso}, \eqref{Lasso1} and \eqref{split_split}.
\end{proof}
%
%
%
%
%
%
%
%
%
%
%
%
%
\section*{Acknowledgments}
Work partially supported by The Research Committee of The University of Crete under Research Grant \#4339:
`{\sl Numerical solution of stochastic partial differential equations}' funded by
The Research Account of the University of Crete (2015-2016).
%
%
%
%

%
\end{document}